
\documentclass[preprint,3p,12pt]{elsarticle}






\usepackage{amsmath,amssymb,amsfonts,amsthm}
\usepackage{graphicx}
\usepackage{subfigure}
\usepackage{float}

\usepackage{booktabs}
\usepackage{colortbl}
\usepackage{caption}
\usepackage{epstopdf}
\usepackage{algorithm}
\usepackage{algpseudocode}
\usepackage{array}
\usepackage{longtable}

\usepackage{bm}
\usepackage{makecell}
\usepackage{multirow}
\usepackage{multicol}

\newtheorem{theorem}{Theorem}[section]

\newtheorem{proposition}[theorem]{Proposition}

\usepackage{lineno}
\modulolinenumbers[1]

\def\bn{\mathbf{n}}
\def\bG{\mathbf{G}}

\def\fvec#1{\boldsymbol{#1}} 
\def\bvec#1{\mathbf{#1}} 
\def\bmat#1{\mathbf{#1}} 

\def\NIT{\texttt{NIT}}

\def\TT{$\text{T}_{\text{con}}^{\text{T}}$}
\def\TC{$\text{T}_{\text{con}}^{\text{C}}$}
\def\TA{$\text{T}_{\text{con}}^{\hat{\text{A}}}$}
\def\TO{$\text{T}_{\text{con}}^{\text{O}}$}
\def\Tcon{$\text{T}_{\text{con}}$}
\def\Tsol{$\text{T}_{\text{sol}}$}
\def\Ttot{$\text{T}_{\text{tot}}$}

\journal{Computers \& Mathematics with Applications}

\begin{document}


\begin{frontmatter}




\title{A DOFs condensation based algorithm for solving saddle point systems in contact computation \footnote{This work was funded by National Natural Science Foundation of China (No. 12171045)} }


\author[a,b]{Xiaoyu Duan}
\ead{duanxiaoyu19@gscaep.ac.cn}
\author[a,c]{Hengbin An\corref{cor1}}
\ead{an\_hengbin@iapcm.ac.cn}
\author[a,c]{Zeyao Mo}
\ead{ zeyao\_mo@iapcm.ac.cn}
\cortext[cor1]{Corresponding author.}

\affiliation[a]{organization={Institute of Applied Physics and Computational Mathematics},
            city={Beijing},
            postcode={100094}, 
            country={China}}
            
\affiliation[b]{organization={Graduate School of China Academy of Engineering Physics},
            city={Beijing},
            postcode={100088}, 
            country={China}}

\affiliation[c]{organization={CAEP Software Center for High Performance Numerical Simulation},
            city={Beijing},
            postcode={100088}, 
            country={China}}
            

\begin{abstract}
In contact mechanics computation, the constraint conditions on the contact surfaces are typically enforced by the Lagrange multiplier method, resulting in a saddle point system. The mortar finite element method is usually employed to discretize the variational form on the meshed contact surfaces, leading to a large-scale discretized saddle point system. Due to the indefiniteness of the discretized system, it is a challenge to solve the saddle point algebraic system. 
For two-dimensional tied contact problem, an efficient DOFs condensation technique is developed. The essential of the proposed method is to carry out the DOFs elimination by using the tridiagonal characteristic of the mortar matrix.
The scale of the linear system obtained after DOFs elimination is smaller,  and the matrix is symmetric positive definite. 
By using the preconditioned conjugate gradient (PCG) method, 
the linear system can be solved efficiently.
Numerical results show the effectiveness of the method.
\end{abstract}



\begin{keyword}
Contact mechanics \sep
Mortar method \sep
Lagrange multiplier method \sep
Saddle point system \sep
Preconditioning \sep
DOFs condensation  


\end{keyword}

\end{frontmatter}



\section{Introduction}
\label{sect:intrd}

The contact phenomena of structural mechanics is widespread 
in real engineering applications, such as automobile design and manufacturing, aero-engine design and optimization, etc~\cite{an2022shear, INTERNODES2022}.
The finite element method (FEM) is widely used in numerical simulation of contact problems~\cite{NTShughes1976finite, wriggers1995finite}.
For numerical computation of the contact problem of structural mechanics, 
two aspects related to contact should be considered. The first is the discretization technique for contact surfaces; the second is the enforcement of the contact constraint to the variational form that is used in FEM.
The essential of contact discretization is the process of searching and matching the adjacent relationship of cells' entities 
(nodes, edges or faces) on contact surfaces after meshing the contact bodies.

The discretization approach of the contact surface mainly includes two categories: the node-to-surface (NTS) approach~\cite{NTShughes1976finite, NTShughes1977finite, NTSbathe1985solution}, and the surface-to-surface (STS) approach~\cite{STSsimo1985perturbed, STSpapadopoulos1992mixed}.
In the NTS method, the continuity constraints are enforced at 
discrete nodal points. In contrast, in the STS method, the continuity constraints are formulated along the contact boundary in a weak integral form. This makes STS  
more accurate than NTS when the meshes are non-matching on contact surfaces~\cite{MORpuso2004mortar}. 
And one typical STS approach is the mortar finite element method that was first introduced as a domain decomposition technique in 1994~\cite{MORbernardi1994new}. The mortar method 
has one advantage to address non-matching surface meshes flexibly and it is considered as a state-of-the-art method for computational contact mechanics~\cite{MORbelgacem1999mortar, MORmcdevitt2000mortar, MORtur2009mortar, popp2012mortar}.

Another aspect aside from the discretization approach is the enforcement of contact constraint conditions.
Two representative methods of enforcing the contact constraint conditions 
are penalty method~\cite{PENALTYzavarise1992real, PENALTYzavarise2009modified} and  
the Lagrange multiplier method~\cite{LAGbelytschko1991contact, LAGpapadopoulos1998lagrange, LAGfranceschini2020algebraically}.
Although the penalty method has an advantage that the size of the discrete linear system is the same as the case 
without contact, this method requires a user-defined penalty parameter affecting the convergence and stability of the numerical solution. 
It is often hard to select an appropriate penalty parameter when multiple materials and contact pairs are involved in the numerical simulation of structural mechanics. 
If the penalty parameter is improperly chosen, it can lead to a severely ill-conditioned linear system, making it challenging to solve.
The Lagrange multiplier method is an exact constraint enforcement strategy where the Lagrange multiplier has a physical interpretation as the contact force acting on the surface. 
Therefore, imposing contact constraints using the Lagrange multiplier method is more natural. 
However, the Lagrange multiplier method introduces an additional Lagrange multiplier degree of freedom and leads to an undesirable increase in the size of the corresponding linear system. 
This system exhibits a typical saddle point structure, which poses significant challenges for solvers and preconditioners, so an efficient solution method is needed.

To avoid solving saddle point linear systems in contact problems, Wohlmuth et al. developed the dual mortar finite element method~\cite{DUALwohlmuth2000mortar,  DUALpopp2012dual, DUALvon2020contact}. 
In the dual mortar method, 
the Lagrange multipliers are constructed using special basis functions based on a biorthogonality with displacement basis functions. 
This specificity can turn a mortar matrix into a diagonal matrix, allowing for easy condensing of the Lagrange multiplier degrees of freedom. 
From a numerical solution perspective, the dual mortar method offers certain advantages. 
This method avoids directly solving a saddle point problem and effectively reduces the size of the system.

Although the dual mortar method can effectively transform and solve saddle point systems, it has two drawbacks compared to the standard mortar method. 
Firstly, the basis functions in the dual mortar method have negative values, which may cause solutions of the discrete systems to violate physical constraints. 
Secondly, the dual mortar method involves bidirectional search computations on both sides of the contact surface during the discrete calculation process.
For large-scale parallel computing, the search cost between 
the contact surfaces is expensive, and this leads to  
a bottleneck for large scale numerical simulation 
of engineering applications. 
Therefore, it is essential to solve the saddle point linear system discretized by the standard mortar method~\cite{reverse2022}. 

Wiesner et al. considered using the AMG (algebraic multigrid) preconditioned Krylov subspace method to solve the saddle point equations in contact problems~\cite{AMGwiesner2021algebraic}. 
They took into account contact constraints on all coarse grid levels.
Specifically, in the aggregation-based coarsening process of AMG, a novel Lagrange multiplier aggregation method for contact surfaces is introduced.
This method ensures that the matrices corresponding to each coarse grid level maintain the saddle point structure, guaranteeing the separation of displacement and Lagrange multiplier degrees of freedom.
Although Wiesner's work has made significant improvements in solving linear systems of equations in contact problems by considering the $2 \times 2$ structure of the global matrix, their work did not further utilize the properties of the mortar matrix in contact problems.

On the other hand, scholars have conducted extensive research on solving general saddle point systems~\cite{SADDELbenzi2005numerical, SADDELaxelsson2015unified, SADDELpestana2015natural}.
Various solution methods and preconditioned techniques have been developed, such as the RHSS method~\cite{HSSbai2017regularized, HSSbai2019regularized}, Uzawa method~\cite{UZAWAho2017accelerating}, Schur complement method, null space method, constraint preconditioned method, approximate incomplete factorization method, and multigrid methods~\cite{SADAMGbenzi2002preconditioning, SADAMGnotay2019convergence}.
In the Schur complement method, the original hard-to-solve saddle point system is converted to two smaller-sized systems, embodying the idea of elimination in its construction.
However, directly applying the Schur complement method to the contact mechanics problem is complex since the inversion of a matrix 
close to the original problem's size is concerned.
Although approaches such as diagonal approximation can reduce the computational cost of the matrix inversion, it is less effective in solving contact problems due to the lack of diagonal dominance in the associated matrix. 

In addition, the Schur complement approximation allows for the construction of different types of block preconditioners, which have been prevalent across various fields
~\cite{PREur2008comparison, PREliu2020nested}.
For example, the SIMPLE method~\cite{patankar1983calculation}, a type of block preconditioner based on the Schur complement, is widely used in computational fluid dynamics.
By introducing the idea of the SIMPLE method to the computation
of contact mechanics, one of the most effective preconditioning methods is constructed~\cite{AMGwiesner2021algebraic}. 
While extensive research has been conducted on solving saddle point problems, efficient solution strategies specifically tailored to contact problems have not received significant attention.

In this paper, we consider solving the saddle point systems
by using the standard mortar approach and the Lagrange multipliers approach in contact problems.
For simplicity of discussion, we focus on the 
case of two-dimensional tied contact problems.
By employing the application characteristics and matrix structure, a novel and efficient solution method 
based on DOFs condensation is proposed for solving the 
saddle point system.
In the proposed method, 
the degrees of freedom of the discrete Lagrange multiplier and the slave side displacement are condensed (eliminated), 
and the original symmetric indefinite saddle point system 
is transformed into a smaller-sized symmetric positive definite system. 
The condensation process is achieved by utilizing the tridiagonal block structure of one of the mortar matrices.
Theoretical analysis demonstrates that the matrix, after elimination, possesses the property of symmetrical positive definiteness.
Numerical results for three contact models show that the obtained system after condensing can be solved 
effectively by the AMG preconditioned conjugate gradient (CG) method
while the original saddle point system is difficult to be 
solved by most of the algorithms in PETSc.
Moreover, the iteration numbers of the CG method are relatively stable as the size of the problem increases,
and this is attractive for large scale 
contact mechanics numerical simulations.

The remainder of this paper is organized as follows. 
Section~\ref{sect:math-model-tie-contact} provides a general description of the two-dimensional tied contact problem.
The mortar method based discretization of the problem and the corresponding discretized linear system is presented in Section~\ref{sect:discrete-linear-system}. 
Section~\ref{sect:method} presents the condensed elimination method for saddle point systems. 
The property of the matrix of the transformed system 
is also provided in this section. 
Numerical results for three models are given in  
Section~\ref{sect:numer}.
Finally, some conclusions and remarks are given in Section~\ref{sect:remark} to end the paper.

\section{Mathematical model for tied contact problem}
\label{sect:math-model-tie-contact}

In this section, the tied contact models, including the differential form and variational form, are introduced.

For tied contact problem, the contact bodies are tied together
by some shared surfaces.
Taking two bodies as an example, two elastic bodies are denoted as $\Omega^{(1)}$ and $\Omega^{(2)}$, where $\Omega^{(1)}$ is a salve body and $\Omega^{(2)}$ is a master body,
and $\Omega^{(i)} \subset \mathbb{R}^d$, $d = 2$ or 3.
In each subdomain $\Omega^{(i)}$, the linear elastic boundary value problem has the following form~\cite{popp2012mortar}
\begin{equation}
\label{strong}
	\begin{aligned}
		-\nabla \cdot \boldsymbol{\sigma}^ {(i)}  &= \fvec{f}^{(i)}  & \; \text{ in } \; \Omega^{(i)}, \\
		\fvec{u}^{(i)} 			  &= \hat{\fvec{u}}^ {(i)} & \;  \text{ on } \; \Gamma_{\mathrm{u}}^{(i)}, \\
		\boldsymbol{\sigma}^{(i)} \cdot \bn^ {(i)} &= \fvec{t}^{(i)} & \; \text{ on } \; \Gamma_{\sigma}^{(i)},
	\end{aligned}
\end{equation}
which consists of the equilibrium equation,
displacement boundary conditions, and stress boundary conditions.
Herein, $\fvec{u}^{(i)}$ is the displacement,
$\boldsymbol{\sigma}^{(i)}$ is the stress that is dependent on the displacement,
$\fvec{f}^{(i)}$ is the body force, $\bn^ {(i)}$ is the unit outer normal vector
of the domain $\Omega^{(i)}$.
$\Gamma_{\mathrm{u}}^{(i)}$ denotes the Dirichlet boundary, where the displacements
are prescribed by $\hat{\fvec{u}}^ {(i)}$, and $\Gamma_{\sigma}^{(i)}$ represents the Neumann boundary, where the tractions are given by $\fvec{t}^{(i)}$.
For the tied contact problem,
the displacement of each body on the contact surface is equal.
In other words, it can be stated as
\begin{equation}
	\fvec{u}^{(1)} = \fvec{u}^{(2)} \quad  \text{on} \quad \Gamma_{\mathrm{c}} ,
\end{equation}
where $\Gamma_{\mathrm{c}}$ represents the contact boundary of
$\Omega^{(1)}$ and $\Omega^{(2)}$.

According to the variational principle, the contact problem can also be equivalently described as a constrained functional minimization problem
\begin{equation}
\label{eqn:constrain-min-prob}
	\left\{
	\begin{aligned}
		\min        \, & \mathcal{W} (\fvec{u}) \\
		\mbox{s.t.} \, & \bG(\fvec{u}) = 0
	\end{aligned}	
	\right.
\end{equation}
where
\begin{eqnarray*}
\mathcal{W} (\fvec{u}) &=& \sum_{i=1}^{2} \left[ \int_{\Omega^{(i)}} \left(\boldsymbol{\sigma}^{(i)} : \boldsymbol{\varepsilon}^{(i)} - \fvec{f}^{(i)} \cdot \fvec{u}^{(i)} \right) \mathrm{d} V
	 -\int_{\Gamma_{\sigma}^{(i)}} \fvec{t}^{(i)} \cdot \fvec{u}^{(i)} \mathrm{d} S \right] , \\
\bG(\fvec{u}) &=& 	\fvec{u}^{(1)} - \fvec{u}^{(2)}.
\end{eqnarray*}

By the Lagrange multiplier method,
the above constrained minimization problem~\eqref{eqn:constrain-min-prob}
can be transformed to an unconstrained minimization problem
\begin{equation}
	\min \mathcal{W}_{\mathrm{LM}} (\fvec{u}, \boldsymbol{\lambda}) := \mathcal{W}(\fvec{u}) + \int_{\Gamma_{\mathrm{c}}} \boldsymbol{\lambda} \cdot \bG(\fvec{u})\text{d} S,
\end{equation}
where $\boldsymbol{\lambda}$ denotes the Lagrange multiplier. Thereby the mixed variational formulation is given by
\begin{equation}
	\delta \mathcal{W}_{\mathrm{LM}}
	=\delta \fvec{u}^{\top} \frac{\partial \mathcal{W}_{\mathrm{LM}} }{\partial \fvec{u}}
        + \delta \boldsymbol{\lambda}^{\top} \frac{\partial \mathcal{W}_{\mathrm{LM}} }{\partial \boldsymbol{\lambda}}
	= 0.
\end{equation}

To define the weak form of the tied contact problem, 
define the following trial function and test function spaces
\begin{equation}
	\begin{aligned}
		\boldsymbol{\mathcal{U}}^{(i)} &= \left\{ \fvec{u}^{(i)} \in \left[ H^{1}(\Omega^{(i)}) \right]^d : \fvec{u}^{(i)} = \hat{\fvec{u}}^{(i)}  \; \text{on} \; \Gamma_{\mathrm{u}} \right\}, \\
		\boldsymbol{\mathcal{V}}^{(i)} &= \left\{ \delta \fvec{u}^{(i)} \in \left[H^{1}(\Omega^{(i)}) \right]^d : \delta \fvec{u}^{(i)}=0 \; \text{on} \; \Gamma_{\mathrm{u}} \right\},
	\end{aligned}
\end{equation}
where $H^{1}(\Omega^{(i)})$ is the Soblev space~\cite{adams2003sobolev}.
Moreover, the Lagrange multiplier is chosen from space $\boldsymbol{\mathcal{M}} = [H^{-\frac{1}{2}}(\Gamma_{\mathrm{c}})]^d$,
the dual space of the trace space 
of $\boldsymbol{\mathcal{V}}^{(1)}$ on $\Gamma_{\mathrm{c}}$.
Based on these considerations, a saddle point type weak formulation is derived:
find $\fvec{u}^{(i)} \in \boldsymbol{\mathcal{U}}^{(i)}$ and $\boldsymbol{\lambda} \in \boldsymbol{\mathcal{M}}$ such that
\begin{equation}
	\label{weakform}
		\begin{aligned}
			-\delta \mathcal{W}_{\mathrm{int}, \mathrm{ext}} \left( \fvec{u}^{(i)}, \delta \fvec{u}^{(i)} \right)
			- \delta \mathcal{W}_{\mathrm{mt}} \left( \boldsymbol{\lambda}, \delta \fvec{u}^{(i)} \right) = 0,
			& \quad \forall \; \delta \fvec{u}^{(i)} \in \boldsymbol{\mathcal{V}}^{(i)}, \\
			\delta \mathcal{W}_{\lambda} \left( \fvec{u}^{(i)}, \delta \boldsymbol{\lambda} \right) = 0,
			& \quad \forall \; \delta \boldsymbol{\lambda} \in \boldsymbol{\mathcal{M}}.
		\end{aligned}
\end{equation}
Herein, $\mathcal{W}_{\mathrm{int},\mathrm{ext}}$ represents the conventional virtual work term due to internal and external forces, $\mathcal{W}_{\mathrm{mt}}$ represents the virtual work term associated with contact forces, and $\mathcal{W}_{\lambda}$ represents the weak form directly related to the tied contact constraint. They are defined as follows:
\begin{eqnarray}
		-\delta \mathcal{W}_{\mathrm{int}, \mathrm{ext}} &=& \sum_{i=1}^{2}\left[ \int_{\Omega^{(i)}} \left( \boldsymbol{\sigma}^{(i)} : \delta \boldsymbol{\varepsilon}^{(i)} - \fvec{f}^{(i)} \cdot \delta \fvec{u}^{(i)} \right) \mathrm{d} V
		-\int_{\Gamma_{\sigma}^{(i)}} \fvec{t}^{(i)} \cdot \delta \fvec{u}^{(i)} \mathrm{d} S \right] ,\\
		-\delta \mathcal{W}_{\mathrm{mt}} &=& \int_{\Gamma_{\mathrm{c}}} \boldsymbol{\lambda} \cdot\left(\delta \fvec{u}^{(1)} - \delta \fvec{u}^{(2)} \right) \mathrm{d} S ,\\
		\quad \, \delta \mathcal{W}_{\lambda} &=& \int_{\Gamma_{\mathrm{c}}} \delta \boldsymbol{\lambda} \cdot \left( \fvec{u}^{(1)} - \fvec{u}^{(2)} \right) \mathrm{d} S.
\end{eqnarray}

\section{Discretization and the linear system}
\label{sect:discrete-linear-system}

\subsection{Discretization based on mortar method}
\label{subsect:discrete}

The discretized linear system can be obtained through the finite element method based on the weak form~\eqref{weakform}.
In numerical simulations of contact mechanics, the mortar finite element method is a popular discretization approach.
This method is used to handle the coupling terms ($\mathcal{W}_{\mathrm{mt}}$ and $\mathcal{W}_{\lambda}$) between the displacement and Lagrange multiplier on contact surfaces.
And the conventional virtual work term $\mathcal{W}_{\mathrm{int},\mathrm{ext}}$ is discretized by the classical finite element method.

In each contact body, the standard linear finite element method~\cite{wang2003} is used to discretize the term $\mathcal{W}_{\mathrm{int},\mathrm{ext}}$ in~\eqref{weakform}.
After discretization, we have
\begin{eqnarray}
	\label{mat1}
	\begin{aligned}
		-\delta \mathcal{W}_{\mathrm{int}, \mathrm{ext},h}
		&=
		\delta \bvec{d}^{\top} \bvec{f}_{\mathrm{int}}(\bvec{d}) - \delta \bvec{d}^{\top} \bvec{f}_{\mathrm{ext}} \\
		&=
		\delta \bvec{d}^{\top} \bmat{K} \bvec{d} - \delta \bvec{d}^{\top} \bvec{f}_{\mathrm{ext}} ,
	\end{aligned}
\end{eqnarray}
where $\bvec{d}$ represents the vector of nodal displacements, $\delta \bvec{d}$ represents the vector
of nodal virtual displacements,
$\bvec{f}_{\mathrm{int}}(\bvec{d})$ and $\bvec{f}_{\mathrm{ext}}$ respectively denote the internal force vector and the external force vector, and $\bmat{K}$ represents stiffness matrix.
For ease of expression, the subscript $\cdot_h$ denotes the corresponding discrete situation.

To discretize $\delta \mathcal{W}_{\mathrm{mt}}$ and $\delta \mathcal{W}_{\lambda}$, it is necessary to define the displacement interpolation and Lagrange multiplier interpolation on the contact surface $\Gamma_{\mathrm{c},h}^{(i)}$.
Taking the two-body tied contact as an example,
Fig.~\ref{mesh} shows the discretized contact surface with non-matching mesh.

\begin{figure}[htbp]
	\centering
	\includegraphics[scale=0.6]{./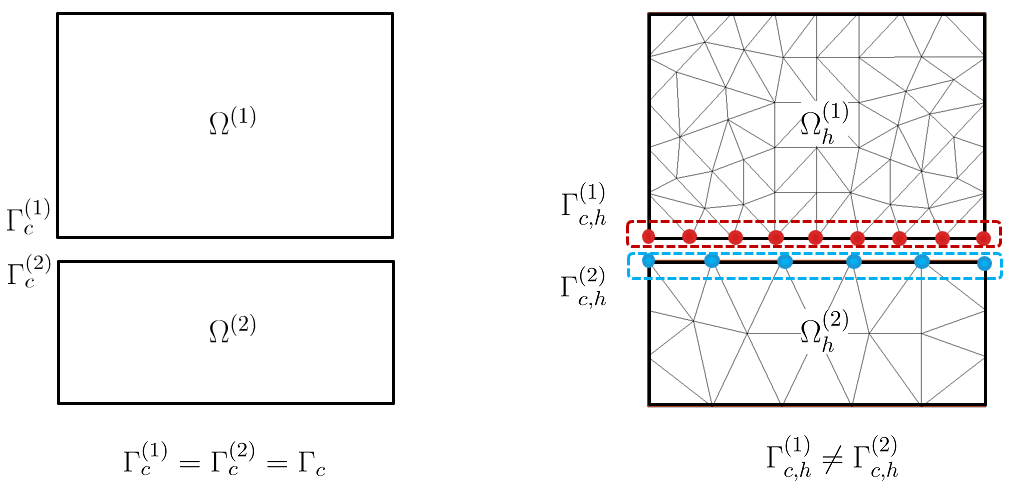}
	\caption{2D two-body tied contact model.}
	\label{mesh}
\end{figure}

The displacement $\fvec{u}_h^{(i)}$ on the disretized contact surface $\Gamma_{\mathrm{c},h}^{(i)}$ is given by
\begin{eqnarray}
	\label{displace}
		\fvec{u}^{(1)}_h = \sum_{k=1}^{n^{(1)}} N_k^{(1)} (\xi^{(1)} ) \bvec{d}_k^{(1)},  \quad
		\fvec{u}^{(2)}_h = \sum_{l=1}^{n^{(2)}} N_l^{(2)} (\xi^{(2)} ) \bvec{d}_l^{(2)}.
\end{eqnarray}
Here, $n^{(1)}$ represents the total number of nodes on $\Gamma_{\mathrm{c},h}^{(1)}$, and $n^{(2)}$ represents the total number of nodes on $\Gamma_{\mathrm{c},h}^{(2)}$.
$\bvec{d}_k^{(1)}$ and $\bvec{d}_l^{(2)}$ respectively represent the displacements of discrete nodes on the slave and the master surface.
The shape functions $N_k^{(1)}$ and $N_l^{(2)}$ represent the one-dimensional linear finite element basis functions.

The Lagrange multiplier is defined on the slave surface, and the corresponding interpolation is:
\begin{eqnarray} \label{lambda}
	\boldsymbol{\lambda}_h = \sum_{j=1}^{m^{(1)}} \Phi_j (\xi^{(1)} ) \boldsymbol{\lambda}_j,
\end{eqnarray}
where $\Phi_j$ is the basis function of the Lagrange multiplier, $j = 1, 2, \ldots, m^{(1)}$.
In most cases, every slave node also acts as a coupling node because it carries the additional Lagrange multiplier.
Therefore, it is common that the number of the Lagrange multipliers is equal to the number of slave surface nodes, that is, $m^{(1)} = n^{(1)}$.
Next, by substituting~\eqref{displace} and~\eqref{lambda} into the weak formulation~\eqref{weakform}, the discrete form of the virtual work $\delta \mathcal{W}_{\mathrm{mt}}$ is expressed as
\begin{eqnarray}
\label{mt}
\begin{aligned}
-\delta \mathcal{W}_{\mathrm{mt}, h}
&=
\sum_{j=1}^{m^{(1)}} \sum_{k=1}^{n^{(1)}} \boldsymbol{\lambda}_{j}^{\top} \left(\int_{\Gamma_{\mathrm{c}, h}^{(1)}} \Phi_{j} N_{k}^{(1)} \mathrm{d} A_{0} \right) \delta \bvec{d}_{k}^{(1)} \\
&-\sum_{j=1}^{m^{(1)}} \sum_{l=1}^{n^{(2)}} \boldsymbol{\lambda}_{j}^{\top} \left( \int_{\Gamma_{\mathrm{c}, h}^{(1)}} \Phi_{j} \left( N_{l}^{(2)} \circ \chi_{h} \right) \mathrm{d} A_{0} \right) \delta \bvec{d}_{l}^{(2)},
\end{aligned}
\end{eqnarray}
where $\chi_{h}: \Gamma_{\mathrm{c},h}^{(1)} \rightarrow \Gamma_{\mathrm{c},h}^{(2)}$ is a discrete mapping from slave side to master side.
Note that $\Gamma_{\mathrm{c}}^{(1)} = \Gamma_{\mathrm{c}}^{(2)}$
in continuous case. However, in discrete case, $\Gamma_{\mathrm{c},h}^{(1)} \neq \Gamma_{\mathrm{c},h}^{(2)}$ because of non-matching mesh on contact surfaces.
Since the numerical integration is only performed on the slave side $\Gamma_{\mathrm{c},h}^{(1)}$,
the map $\chi_{h}$ has a crucial influence on the computation~\cite{popp2012mortar}.

Now define
a $m^{(1)} \times n^{(1)}$ block matrix $\bmat{D}$ and
a $m^{(1)} \times n^{(2)}$ block matrix $\bmat{M}$
by
\begin{eqnarray}
		\begin{aligned}
			&{\bmat{D}}[j, k] = D_{j k} {\bmat{I}}_{\textsf{ndim}},  &j=1, \ldots, m^{(1)}, \quad k= 1, \dots , n^{(1)}\\
			&{\bmat{M}}[j, l] = M_{j l} {\bmat{I}}_{\textsf{ndim}},  &j=1, \ldots, m^{(1)}, \quad l = 1, \dots , n^{(2)}.
		\end{aligned}
\end{eqnarray}
where
\begin{eqnarray*}
D_{j k} = \int_{\Gamma_{\mathrm{c}, h}^{(1)}} \Phi_{j} N_{k}^{(1)} \mathrm{d} S,
\quad
M_{j l} = \int_{\Gamma_{\mathrm{c}, h}^{(1)}} \Phi_{j} \left( N_{l}^{(2)} \circ \chi_{h} \right) \mathrm{d} S,
\end{eqnarray*}
and ${\bmat{I}}_{\textsf{ndim}}$ is the ${\textsf{ndim}} \times {\textsf{ndim}}$ identity matrix.
In general, both $\bmat{D}$ and $\bmat{M}$ are not square matrices.
However, $\bmat{D}$ becomes a square matrix for the common choice $m^{(1)} = n^{(1)}$.

Divide the grid nodes into two sets: one is the node set of the slave body, which includes the slave contact surface node set $\mathcal{S}$ and the internal node set $\mathcal{N}_1$; the other is the node set of the master body, which includes the master contact surface node set $\mathcal{M}$ and the internal node set $\mathcal{N}_2$.
Thus, all displacement degrees of freedom can be represented in a block form as
\begin{equation}
    \bvec{d} = [\bvec{d}_1, \bvec{d}_2], \quad
    \bvec{d}_1 = [\bvec{d}_{\mathcal{N}_1}, \bvec{d}_\mathcal{S}], \quad
    \bvec{d}_2 = [\bvec{d}_{\mathcal{N}_2}, \bvec{d}_\mathcal{M}].
\end{equation}
Correspondingly, by the definition of the matrices
$\bmat{D}$ and $\bmat{M}$,
equation~\eqref{mt} can be written in matrix form:
\begin{eqnarray}
\label{mat2}
-\delta \mathcal{W}_{\mathrm{mt}, h}
			&=& \delta \bvec{d}_\mathcal{S}^{\top} {\bmat{D}}^{\top} \boldsymbol{\lambda} - \delta \bvec{d}_\mathcal{M}^{\top} {\bmat{M}}^{\top} \boldsymbol{\lambda}
			= \delta \bvec{d}^{\top}
			\left[
			\begin{array}{c}
				\mathbf{0} \\ {\bmat{D}}^{\top} \\ \mathbf{0} \\ -{\bmat{M}}^{\top}
			\end{array}
			\right] \boldsymbol{\lambda}  \nonumber \\
			&=& \delta \bvec{d}_1^{\top} \tilde{{\bmat{D}}}^{\top} \boldsymbol{\lambda} - \delta \bvec{d}_2^{\top} \tilde{{\bmat{M}}}^{\top} \boldsymbol{\lambda}
			= \delta \bvec{d}^{\top}
			\left[
			\begin{array}{c}
				\tilde{{\bmat{D}}}^{\top} \\ -\tilde{{\bmat{M}}}^{\top}
			\end{array} \right] \boldsymbol{\lambda}
			= \delta \bvec{d}^{\top} {\bmat{G}}^{\top} \boldsymbol{\lambda} ,
\end{eqnarray}
where
\begin{equation}
   {\bmat{G}} = [\tilde{{\bmat{D}}}, -\tilde{{\bmat{M}}}], \quad
    \tilde{{\bmat{D}}} = [\mathbf{0}, {\bmat{D}}], \quad
    \tilde{{\bmat{M}}} = [\mathbf{0}, {\bmat{M}}]. 
\end{equation}
Furthermore, the discrete form of the weak formulation $\mathcal{W}_{\lambda} $ can be expressed as
\begin{eqnarray}
	\label{mat3}
		\begin{aligned}
			\delta \mathcal{W}_{\lambda, h}
			&= \delta \boldsymbol{\lambda}^{\top} {\bmat{D}} \bvec{d}_\mathcal{S} - \delta \boldsymbol{\lambda}^{\top} {\bmat{M}} \bvec{d}_\mathcal{M}
			= \delta \boldsymbol{\lambda}^{\top} \tilde{{\bmat{M}}}  \bvec{d}_1 - \delta \boldsymbol{\lambda}^{\top} \tilde{{\bmat{M}}} \mathrm{d}_2
			= \delta \boldsymbol{\lambda}^{\top} {\bmat{G}} \bvec{d}.
		\end{aligned}
\end{eqnarray}

In summary, by combining equations~\eqref{mat1},~\eqref{mat2}, and~\eqref{mat3}, the discrete system can be written as follows:
\begin{eqnarray}
	\label{eqn: twobody-disc}
	\begin{aligned}
		{\bmat{K}} \bvec{d} + {\bmat{G}}^{\top} \boldsymbol{\lambda} -\bvec{f}_{\mathrm{ext}} &= 0, \\
		{\bmat{G}} \bvec{d} &= 0,
	\end{aligned}
\end{eqnarray}
which can be expressed compactly by matrix-vector form
\begin{equation}
	\label{eqn: total}
	\left[ \begin{array}{c c }
		\bmat{K} & \bmat{G}^{\top} \\
		\bmat{G} & \mathbf{0}
	\end{array} \right]
	\left[ \begin{array}{c}
		\bvec{d} \\
		\boldsymbol{\lambda}
	\end{array} \right]
	=
	\left[ \begin{array}{c}
		\bvec{f} \\
		\mathbf{0}
	\end{array} \right],
\end{equation}
or succinctly expressed as 
\begin{eqnarray}
\label{eqn:lin-system-Ax-b}    
  \bmat{\mathcal{A}} \bvec{x} = \bvec{b}.
\end{eqnarray}

\subsection{Representation of the linear system for multi-body contact}
\label{subsect:present-multi-contact}

Consider the contact computation involving $n$ bodies, numbered from 1 to $n$.
Let $m$ represent the total number of contact surfaces, denoted as $f_1, \cdots, f_m$.
For each contact surface $f_i$, let $(b_{\mathcal{S}}^i, b_{\mathcal{M}}^i)$ represent the corresponding contact bodies, 
where $b_{\mathcal{S}}^i, b_{\mathcal{M}}^i \in \{1, \cdots, n \}$ denote the slave body and master body, respectively.

In the context of multi-body tied contact problems, the linear system can still be represented by~\eqref{eqn: total}.
Specifically, for $n$-body contact case, the matrix $\bmat{K}$ 
and the right-hand side $\bvec{f}$ can be expressed in block form:
\begin{equation}
	\bmat{K}=
	\left[
	\begin{array}{c c c}
		\bmat{K}_1  &  		 & \\
		            & \ddots &  \\
		              &  	   & \bmat{K}_{n}
	\end{array}
	\right], \quad	
	\bvec{f}=
	\left[
	\begin{array}{c}
		\bvec{f}_1 \\
		\vdots   \\
		\bvec{f}_{n}
	\end{array}
	\right],
\end{equation}
each subblock $\bmat{K}_i$ and $\bvec{f}_i$ corresponds to the stiffness matrix and external force for body $i$. 
In addition, the matrix $\bmat{G}$ takes the form:
\begin{equation}
	\bmat{G} =
	\left[
	\begin{array}{c}
		\bmat{G}_1 \\
		\vdots \\
		\bmat{G}_{m}
	\end{array}
	\right],
\end{equation}
and $\bmat{G}_i$ is a $1 \times n$ block matrix given by
\begin{equation}
    \begin{matrix}
    	\bmat{G}_i = &  \Big [ \cdots  &  \tilde{\bmat{D}}_{i} & \cdots & -\tilde{\bmat{M}}_{i} &  \cdots \Big ], \\
    	& & \downarrow & & \downarrow & \\
    	& & b_{\mathcal{S}}^i  & & b_{\mathcal{M}}^i  &
    \end{matrix}  
\end{equation}
where the $b_{\mathcal{S}}^i$th subblock is $\tilde{\bmat{D}}_{i}$, the $b_{\mathcal{M}}^i$th subblock is $-\tilde{\bmat{M}}_{i}$, and the remaining positions are filled with zeros.

The specific assembly of the linear system for multi-body contact
problem can be described in Algorithm~\ref{algorithm: discrete}.
In the algorithm, first, a classical finite element method is performed on each contact body to assemble the stiffness matrix 
$\bmat{K}_i$, followed by the discretization of each contact surface using the mortar element method.
Finally, an overall matrix $\bmat{\mathcal{A}}$ is formed.

\begin{algorithm}[htbp]
	\caption{Assembly of the linear system for tied contact problem.}
	\label{algorithm: discrete}
	\hspace*{0.02in} { Input:}
        contact model, number of contact bodies $n$ and number of contact surfaces $m$. \\
	\hspace*{0.02in} { Output:}
        The linear system with matrix $\bmat{\mathcal{A}}$, and right-hand side $\bvec{b}$.
	\begin{algorithmic}[1]
		\For{$i = 1 : n$}
		\State Assemble the stiffness matrix $\bmat{K}_i$ of the $i$th contact body.
		\EndFor
		\For{$j = 1 : m$}
		\State Assemble the mortar matrices $\bmat{D}_j$ and $\bmat{M}_j$.
		\EndFor
		\State  Assemble the global matrix $\bmat{\mathcal{A}}$ by $\bmat{K}_i,\bmat{D}_j$ and $\bmat{M}_j$, assemble the global right-hand side $\bvec{b}$.
	\end{algorithmic}
\end{algorithm}

The linear equation~\eqref{eqn: total} is a saddle point system with a $2 \times 2$ block structure, where the upper left subblock $\bmat{K}$ is a symmetric positive definite or symmetric positive semidefinite matrix. 
Due to the introduction of Lagrange multipliers in contact computation, the coefficient matrix of the saddle point system is not only larger than $\bmat{K}$, and most importantly, the matrix is indefinite.
These factors significantly make it difficult to solve the system.
In the next section, an efficient solution method is proposed 
for this symmetric indefinite saddle point system.

\section{Condensation method for saddle point linear systems}
\label{sect:method}

In this section, 
an effective condensation elimination method is proposed to transform the linear saddle point system of two-dimensional tied contact problems into a symmetric positive definite system.
The method is based on condensing Lagrange multipliers by the structure of the mortar matrices associated with the slave surface. 
Therefore, the method can not only reduce the size of the original system but also transform a symmetric positive indefinite 
linear system into a symmetric positive definite system.  
This provides significant convenience for solving linear systems in contact problems.

In addition, the elimination process for multiple contact surfaces is independent of each other, 
allowing for parallel implementation.

\subsection{Elimination by condensation} 
\label{subsect:elim-conden}

For ease of introducing the elimination algorithm, 
all nodes are segregated into three subsets: the master node set $\mathcal{M}$, the slave node set $\mathcal{S}$, and the remaining node set $\mathcal{N}$.
The unknowns are accordingly partitioned as $\bvec{x} = [\bvec{d}_{\mathcal{N}},\bvec{d}_\mathcal{M},\bvec{d}_\mathcal{S},\boldsymbol{\lambda}]$, and the linear system~\eqref{eqn: total} can be equivalently written as:
\begin{equation}
	\label{twobody1}
	\begin{small}
		\left[ \begin{array}{c c c c}
			\bmat{K}_{\mathcal{N} \mathcal{N}} & \bmat{K}_{\mathcal{N} \mathcal{M}} & \bmat{K}_{\mathcal{N} \mathcal{S}}  & \mathbf{0} \\
			\bmat{K}_{\mathcal{M} \mathcal{N}} & \bmat{K}_{\mathcal{M} \mathcal{M}} & \mathbf{0} 							 & -\bmat{M}^{\top} \\
			\bmat{K}_{\mathcal{S} \mathcal{N}} & \mathbf{0} 						  & \bmat{K}_{\mathcal{S} \mathcal{S}}  & \bmat{D}^{\top}  \\
			\mathbf{0} 							& -\bmat{M} 						  & \bmat{D} 							 & \mathbf{0}
		\end{array} \right]
		\left[ \begin{array}{c}
			\bvec{d}_{\mathcal{N}} \\ 
			\bvec{d}_{\mathcal{M}} \\ 
			\bvec{d}_{\mathcal{S}} \\ 
			\boldsymbol{\lambda}
		\end{array} \right]
		=
		\left[ \begin{array}{c}
			\bvec{f}_{\mathcal{N}} \\
			\bvec{f}_{\mathcal{M}} \\
			\bvec{f}_{\mathcal{S}}\\
			\mathbf{0}
		\end{array} \right].
	\end{small}
\end{equation}

The elimination process can be performed in two steps.
The first step is to eliminate the Lagrange multiplier degrees of freedom, followed by the elimination of the slave side displacement degrees of freedom in the second step.
Firstly, by the third row of equation~\eqref{twobody1}, the unknown $\boldsymbol{\lambda}$ can be expressed as follows:
\begin{equation}
	\label{eqn: lambda-exp}
	\boldsymbol{\lambda} =
	\bmat{D}^{-\top} ( \bvec{f}_{\mathcal{S}}  - 
	\bmat{K}_{\mathcal{S} \mathcal{N}} \bvec{d}_{\mathcal{N}}
	-
	\bmat{K}_{\mathcal{S} \mathcal{S}} \bvec{d}_{\mathcal{S}}
	).
\end{equation}
By substituting equation~\eqref{eqn: lambda-exp} into the second row of equation~\eqref{twobody1} and eliminating the unknown $\boldsymbol{\lambda}$, 
we can obtain the following system:
\begin{equation}
	\label{step1}
	\left[ \begin{array}{c c c}
		\bmat{K}_{\mathcal{N} \mathcal{N}} 								  & \bmat{K}_{\mathcal{N} \mathcal{M}} & \bmat{K}_{\mathcal{N} \mathcal{S}} \\
		\bmat{K}_{\mathcal{M} \mathcal{N}} + \bmat{P}^{\top} \bmat{K}_{\mathcal{S} \mathcal{N}} & \bmat{K}_{\mathcal{M} \mathcal{M}} & \bmat{P}^{\top} \bmat{K}_{\mathcal{S} \mathcal{S}} \\
			\mathbf{0} 															  & -\bmat{M} 						    & \bmat{D} 
	\end{array} \right]
	\left[ \begin{array}{c}
		\bvec{d}_{\mathcal{N}} \\ 
		\bvec{d}_{\mathcal{M}} \\ 
		\bvec{d}_{\mathcal{S}} 
	\end{array} \right]
	= 
	\left[ \begin{array}{c}
		\bvec{f}_{\mathcal{N}} \\
		\bvec{f}_{\mathcal{M}} + \bmat{P}^{\top} \bvec{f}_{\mathcal{S}} \\
		\mathbf{0}
	\end{array} \right],
\end{equation}
where $\bmat{P}=\bmat{D}^{-1}\bmat{M}$. 
Secondly, the last row of equation~\eqref{step1} yields:
\begin{equation}
	\label{eqn: ds-exp}
	\bvec{d}_{\mathcal{S}} = \bmat{D}^{-1} \bmat{M} \bvec{d}_{\mathcal{M}} = \bmat{P} \bvec{d}_{\mathcal{M}},
\end{equation}
that is, the slave side displacement $\bvec{d}_{\mathcal{S}}$ can be expressed in terms of  the master side displacement $\bvec{d}_{\mathcal{M}}$.
Substituting equation~\eqref{eqn: ds-exp} into the first two lines of equation~\eqref{step1} and eliminating $\bvec{d}_{\mathcal{S}}$ results in:
\begin{equation}
	\label{twobody2}
	\left[ \begin{array}{c c}
		\bmat{K}_{\mathcal{N} \mathcal{N}} & \bmat{K}_{\mathcal{N} \mathcal{M}} + \bmat{K}_{\mathcal{N} \mathcal{S}} \bmat{P} \\
		\bmat{K}_{\mathcal{M} \mathcal{N}} + \bmat{P}^{\top} \bmat{K}_{\mathcal{S} \mathcal{N}} & \bmat{K}_{\mathcal{M} \mathcal{M}} + \bmat{P}^{\top} \bmat{K}_{\mathcal{S} \mathcal{S}} \bmat{P}
	\end{array} \right]
	\left[ \begin{array}{c}
		\bvec{d}_{\mathcal{N}} \\ 
		\bvec{d}_{\mathcal{M}} 
	\end{array} \right]
	=
	\left[ \begin{array}{c}
		\bvec{f}_{\mathcal{N}} \\
		\bvec{f}_{\mathcal{M}} + \bmat{P}^{\top} \bvec{f}_{\mathcal{S}}
	\end{array} \right].
\end{equation}

The derivation above shows that system~\eqref{twobody1} is equivalent to the system~\eqref{twobody2}. This is achieved by eliminating the Lagrange multipliers and slave side displacements, reducing the overall size of the system.
The coefficient matrix after elimination is symmetric positive definite (Proposition~\ref{prop:hat-A-spd}). 
Thus the preconditioned conjugate gradient (PCG) method can be 
used to solve the system.

While some degrees of freedom are condensed and the size of
the linear system is reduced, some fillings are introduced for
the matrix of the reduced system. 
By equation~\eqref{twobody2}, only in the rows corresponding to the master side degrees of freedom $\bvec{d}_{\mathcal{M}}$ and the nonzero rows of $\bmat{K}_{\mathcal{N} \mathcal{S}}$,  some more 
filling will be introduced.
However, the percentage of rows with further fillings
is small compared to the overall equation.

To perform the aforementioned elimination process, it is 
essential to compute the matrix 
$\bmat{P} = \bmat{D}^{-1} \bmat{M}$. 
To address the contact problems of multiple elastic bodies,
we need to divide the degrees of freedom on slave nodes and master nodes as
\begin{equation}
    \bvec{d}_{\mathcal{S}} =  [\bvec{d}_{\mathcal{S},1}, \cdots, \bvec{d}_{\mathcal{S},m}],
    \quad
    \bvec{d}_{\mathcal{M}} =  [\bvec{d}_{\mathcal{M},1}, \cdots, \bvec{d}_{\mathcal{M},m}],
\end{equation}
where $m$ denotes the number of contact surfaces. 
Thus, the global mortar matrix $\bmat{D}$ and $\bmat{M}$ can be formulated as follows:
\begin{equation}
		\bmat{D}=
		\left[ \begin{array}{c c c}
			\bmat{D}_1 &  		 & 			\\
			& \ddots &  		\\
			&  		 & \bmat{D}_{m}
		\end{array} \right], \quad
		\bmat{M}=
		\left[ \begin{array}{c c c}
			\bmat{M}_1 &  		 & 		 	\\
			& \ddots &  		\\
			&  		 & \bmat{M}_{m}
		\end{array} \right]	.
\end{equation}
Accordingly, matrix $\bmat{P}$ can be formulated as:
\begin{equation}\label{P}
		\begin{aligned}
			\bmat{P} = \bmat{D}^{-1} \bmat{M}
			=
			\left[ \begin{array}{c c c}
				\bmat{D}_1^{-1} \bmat{M}_1 &  	  & 		\\
				& \ddots &  		\\
				&  	  & \bmat{D}_{m}^{-1} \bmat{M}_{m}
			\end{array} \right]	
			=
			\left[ \begin{array}{c c c}
				\bmat{P}_1 &  		 &    \\
				& \ddots &    \\
				&  		 & \bmat{P}_{m}
			\end{array} \right],
		\end{aligned}
\end{equation}
and $\bmat{P}$ is a block diagonal matrix, where the number of blocks is equal to the number of contact surfaces.
Equation~\eqref{P} shows that the computation of $\bmat{P}_i = \bmat{D}^{-1}_i \bmat{M}_i$ is independent at each contact surface. 
If the mesh of the contact surface is matching, 
then $\bmat{D}_i = \bmat{M}_i$, and $\bmat{P}_i$ is an identity matrix. 
In the case that the mesh is non-matching, it is necessary to compute the subblocks of the matrix $\bmat{P}$, which satisfies the equation $\bmat{P} = \bmat{D}^{-1} \bmat{M}$, or equivalently 
\begin{eqnarray}
	\label{eqn: mat-equ}
	\bmat{D} \bmat{P} = \bmat{M}.
\end{eqnarray}
To obtain the matrix $\bmat{P}$, the above matrix equation needs to be solved.

\subsection{Solving matrix equations based on block Thomas algorithm}
\label{subsect:block-Thomas}

The implementation of the condensation process concerns
the solution for the matrix equation~\eqref{eqn: mat-equ}. 
For the two-dimensional tied contact problem, 
by exploring the application characteristics and structural features of the mortar matrix $\bmat{D}$, 
a block Thomas algorithm is used to solve the equation.
For clarity of description of the method, 
an example with a single contact surface is provided below.

Assuming that only one contact surface exists between two contact bodies and it contains $n$ contact nodes.
The elements of the mortar matrix $\bmat{D}$   
can be represented as:
\begin{equation}
\begin{aligned}
   \bmat{D}[j, k] &= D_{j, k} \bmat{I}_{\textsf{ndim }}
   = \left( \int_{\Gamma_{\mathrm{c}, h}^{(1)}} \Phi_{j} N_{k}^{(1)} \mathrm{d} S \right) \bmat{I}_{\textsf{ndim}}, \quad 
   j, k = 1, \ldots, n,
\end{aligned}
\end{equation}
where Lagrange multiplier basis function $\Phi_j$ and slave side displacement basis function $N_{k}^{(1)}$ are both linear functions.
The matrix $\bmat{D}$ contains subblocks $\bmat{D}[j,k]$ corresponding to the $j$-th and $k$-th slave nodes, 
where each subblock is a $\textsf{ndim} \times \textsf{ndim}$ matrix.
By the property of the linear finite element basis function,
it is easy to know that
$\bmat{D}[j, k] \not= 0$ if and only if $j$ and $k$ represent the same or adjacent spatial node, and $\bmat{D}[j, k] = 0$ otherwise.
If the nodes are numbered by their spatial adjacency relationship, the $j$-th row block of matrix $\bmat{D}$ (representing the row block of matrix $\bmat{D}$ at node $j$) contains only three non-zero components: $\bmat{D}[j,j-1], \bmat{D}[j,j]$, and $\bmat{D}[j,j+1]$. 
In short, the matrix $\bmat{D}$ exhibits a tridiagonal block structure.
For the sake of convenience, in the following, we will use the notation $\bmat{D}_{j,k}$ to represent the subblock matrix $\bmat{D}[j, k]$.
Here's the specific form of $\bmat{D}$:
\begin{equation}
	\begin{small}
		\bmat{D} =
		\left[ \begin{array}{c c c c c}
			\bmat{D}_{1,1} & \bmat{D}_{1,2} &             &	          &		 	   \\
			\bmat{D}_{2,1} & \bmat{D}_{2,2} & \bmat{D}_{2,3}     &             &		  	   \\
			& \ddots  &     \ddots 	& \ddots  	  &			   \\
			&	      & \bmat{D}_{n-1,n-2} & \bmat{D}_{n-1,n-1} & \bmat{D}_{n-1,n}  \\
			&		  & 	        & \bmat{D}_{n,n-1}	  & \bmat{D}_{n,n}
		\end{array} \right], \quad 	
	\end{small}
\end{equation}
where
\begin{equation*}
    \bmat{D}_{j,k} =
    \left[ \begin{array}{c c}
    	D_{j, k} &                 \\
    	        & D_{j, k}
    \end{array}	\right] .
\end{equation*}

By the structure of matrix $\bmat{D}$, 
we can perform an LU decomposition 
$\bmat{D} = \bmat{L}\bmat{U}$, where
\begin{equation}
	\begin{aligned}
		\bmat{L} =
		\left[ \begin{array}{c c c c c}
			\bmat{I} 	&    	 &         &	     &		          			\\
			\bmat{L}_2 & \bmat{I} 	 &         &      	 & 		   	     			\\
			& \ddots & \ddots  &  	     &   	  	     			\\
			&  		 & \bmat{L}_{n-1} &	\bmat{I}    &		  		 			\\
			&  		 &         & \bmat{L}_{n}   &	\bmat{I}	  			 		\\	
		\end{array} \right], \quad 
		\bmat{U} =
		\left[ \begin{array}{c c c c c}
			\bmat{U}_1 & \bmat{D}_{1,2} &         &	      &		      				\\
			& \bmat{U}_2     & \bmat{D}_{2,3} &    	  & 	   					\\
			& 	  	  & \ddots  & \ddots  &   	     			 	\\
			&  	  	  &         & \bmat{U}_{n-1} & \bmat{D}_{n-1,n}				\\
			&  	  	  &         &  		  & \bmat{U}_{n}		 			\\	
		\end{array} \right]	,
	\end{aligned}
\end{equation}
each subblock is a $2 \times 2$ matrix:
\begin{equation}
	\bmat{L}_i =
	\left[ \begin{array}{c c}
		l_{i} &                 \\
		& l_{i}
	\end{array}	\right], \;
	\bmat{I} =
	\left[ \begin{array}{c c}
		1 &                 \\
		& 1
	\end{array}	\right], \;
	\bmat{U}_i =
	\left[ \begin{array}{c c}
		u_{i} &                 \\
		& u_{i}
	\end{array}	\right] .
\end{equation}
The specific computation for $l_i$  and $u_i$ 
is as follows:
\begin{equation}
	\begin{aligned}
		&u_i = D_{i,i}  \, &i &= 1 \\
		&l_i = D_{i,i-1} / u_{i-1} , \quad  u_i = D_{i,i} - l_i  D_{i-1,i}  \,  &i &= 2, \cdots, n
	\end{aligned}
\end{equation}
Thus, the matrix equation $\bmat{D} \bmat{P}=\bmat{M}$ becomes
$\bmat{L} \bmat{U} \bmat{P} = \bmat{M}$.
To solve this matrix equation, the ``forward'' and ``backward'' processes can be employed as follows:
\begin{enumerate}
	\item[(a)] Solve the block lower triangular equation $\bmat{L} \bmat{Y} = \bmat{M}$ to obtain $\bmat{Y}$.
	\item[(b)] Solve the block upper triangular equation $\bmat{U} \bmat{P} = \bmat{Y}$ to get $\bmat{P}$.
\end{enumerate}
Specifically, the two matrix equations can be solved by sequentially updating each row block:
\begin{enumerate}
	\item[(a)] Compute the row block of matrix $\bmat{Y}$ by
	\begin{eqnarray}
		\label{eqn: low-tri-equ-sol}
			\left\{
			\begin{aligned}
				\bmat{Y}(i, :) &= \bmat{M}(i, :), \, &i &= 1 \\
				\bmat{Y}(i, :) &= \bmat{M}(i, :) - \bmat{L}_i \bmat{Y}(i-1, :), \, &i &= 2, \cdots, n
			\end{aligned}
			\right.
	\end{eqnarray}
	\item[(b)] Compute the row block of matrix $\bmat{P}$ by
	\begin{eqnarray}
		\label{eqn: up-tri-equ-sol}
			\left\{
			\begin{aligned}
				\bmat{P}(i, :) &= \bmat{Y}(i, :)  \bmat{U}_i^{-1}, \, &i &= n \\
				\bmat{P}(i, :) &= \left(\bmat{Y}(i, :) - \bmat{D}_{i,i+1} \bmat{P}(i+1, :) \right)  \bmat{U}_i^{-1} , \, &i &= n-1, \cdots, 1
			\end{aligned}
			\right.
	\end{eqnarray}
\end{enumerate}
In~\eqref{eqn: low-tri-equ-sol} and~\eqref{eqn: up-tri-equ-sol}, $\bmat{Y}(i,:)$ denotes the $i$-th row block of matrix $\bmat{Y}$, which contains the rows $2i - 1$ and $2i$. 
The same notation applies to matrices $\bmat{M}$ and $\bmat{P}$.

For multiple contact surface problems, we can solve the corresponding matrix equation $\bmat{D}_i \bmat{P}_i= \bmat{M}_i$ separately on each surface. 
The computation for different contact surfaces is independent, allowing for parallel computation on multiple surfaces.

It is important to note that, by the aforementioned elimination process, the matrix equation $\bmat{D} \bmat{P} = \bmat{M}$ 
is solved exactly. 
The above process is a generalization of Thomas algorithm for solving linear equations~\cite{press2007numerical}.
The computation only involves vector correction operations, making it relatively simple to implement.

\subsection{Matrix representation}
\label{subsect:matrix-present}

In Subsection~\ref{subsect:elim-conden}, a detailed description of the specific process of the condensed elimination algorithm is presented. 
The elimination process will be described in matrix form in this section for the purpose of describing the algorithm concisely.

Define a matrix $\bmat{T}$ by
\begin{equation}\label{eqn: T}
	\bmat{T}
	=
	\left[ \begin{array}{c c c c}
		\bmat{I}_{\mathcal{S} \mathcal{S}} & 							  &   	 					    & 		\\
		& \bmat{I}_{\mathcal{N} \mathcal{N}} &  						    &		\\
		\bmat{P}^{\top}						&  	  						  & \bmat{I}_{\mathcal{M} \mathcal{M}} &       \\
		& 							  & 							& \bmat{I}_{\boldsymbol{\lambda} \boldsymbol{\lambda}}
	\end{array} \right]	,
\end{equation}
where the matrix $\bmat{P}$ is given by~\eqref{P}.
By the matrix $\bmat{T}$, equation~\eqref{eqn:lin-system-Ax-b} can be transformed as
\begin{equation}
	\bmat{T} \boldsymbol{\mathcal{A}} \bmat{T}^{\top} (\bmat{T}^{\top})^{-1} \bvec{x} = \bmat{T} \bvec{b} ,
\end{equation}
or specifically,
\begin{equation}\label{TATt}
    \left[ \begin{array}{c c c c}   
        \bmat{K}_{\mathcal{S} \mathcal{S}}	& \bmat{K}_{\mathcal{S} \mathcal{N}} & \bmat{K}_{\mathcal{S} \mathcal{S}} \bmat{P} & \bmat{D}^{\top} \\
	\bmat{K}_{\mathcal{N} \mathcal{S}} & \bmat{K}_{\mathcal{N} \mathcal{N}} & \bmat{K}_{\mathcal{N} \mathcal{M}} + \bmat{K}_{\mathcal{N} \mathcal{S}} \bmat{P} & \mathbf{0} \\
	\bmat{P}^{\top} \bmat{K}_{\mathcal{S} \mathcal{S}} & \bmat{K}_{\mathcal{M} \mathcal{N}} + \bmat{P}^{\top} \bmat{K}_{\mathcal{S} \mathcal{N}} & \bmat{K}_{\mathcal{M} \mathcal{M}} + \bmat{P}^{\top} \bmat{K}_{\mathcal{S} \mathcal{S}} \bmat{P} & \mathbf{0} \\
	\bmat{D} & \mathbf{0} & \mathbf{0} & \mathbf{0}
    \end{array} \right]
    \left[ \begin{array}{c}
	\bvec{d}_{\mathcal{S}} - \bmat{P} \bvec{d}_{\mathcal{M}} \\
	\bvec{d}_{\mathcal{N}} \\
	\bvec{d}_{\mathcal{M}} \\
	\boldsymbol{\lambda}
    \end{array} \right]
    =
    \left[ \begin{array}{c}
	\bvec{f}_{\mathcal{S}} \\
	\bvec{f}_{\mathcal{N}} \\
	\bvec{f}_{\mathcal{M}} + \bmat{P}^{\top} \bvec{f}_{\mathcal{S}} \\
	\mathbf{0}
    \end{array} \right].
\end{equation}
Next introduce a matrix
\begin{equation}\label{eqn: C}
	\bmat{C}
	=
	\left[ \begin{array}{c c c c}
		\mathbf{0} & \bmat{I}_{\mathcal{N} \mathcal{N}} & \mathbf{0} 						  & \mathbf{0}		\\
		\mathbf{0} & \mathbf{0} 							& \bmat{I}_{\mathcal{M} \mathcal{M}} & \mathbf{0}
	\end{array} \right].
\end{equation}
By the matrix $\bmat{C}$, 
the elimination of the degrees of freedom can be achieved by removing certain rows and columns of the matrix in equation~\eqref{TATt}. 
In fact, it is easy to check that the matrix 
$\bmat{C}\bmat{T} \boldsymbol{\mathcal{A}} \bmat{T}^{\top} \bmat{C}^{\top}$ is the coefficient matrix of the system after elimination, as shown in equation~\eqref{twobody2}.

In summary, the process of condensed elimination 
(DOFs condensation based method) can be succinctly described as follows:
\begin{enumerate}
	\item[(1)] Define the matrix $\bmat{T}$ as shown in equation~\eqref{eqn: T};
	\item[(2)] Define the matrix $\bmat{C}$ as shown in equation~\eqref{eqn: C};
	\item[(3)] Compute the matrix $\bmat{F} = \bmat{C}\bmat{T}$;
	\item[(4)] Solve the linear system
	\begin{eqnarray}
		\label{eqn: FAFt}
		\hat{\boldsymbol{\mathcal{A}}} \hat{\bvec{x}} = \hat{\bvec{b}},
		\quad
		\text{where}
		\quad
		\hat{\boldsymbol{\mathcal{A}}} = \bmat{F} \boldsymbol{\mathcal{A}} \bmat{F}^{\top},
		\quad
		\hat{\bvec{b}} = \bmat{F} \bvec{b}.
	\end{eqnarray}
\end{enumerate}
Here, $\hat{\boldsymbol{\mathcal{A}}}$, $\hat{\bvec{x}}$, and $\hat{\bvec{b}}$ represent the condensed stiffness matrix, DOFs vector, and right-hand side vector after the elimination, respectively.
In fact, $\hat{\bvec{x}}$ is given as 
\begin{equation}
	\hat{\bvec{x}} = [\bvec{d}_\mathcal{N}, \bvec{d}_\mathcal{M}]^{\top}.
\end{equation}
The matrix $\bmat{F}$ is referred to as the {\em elimination matrix}, which is the product of the condensed matrix $\bmat{C}$ and the elementary row transformation matrix $\bmat{T}$. 

In the following, we will devote to the analysis of 
the property of the matrix $\hat{\boldsymbol{\mathcal{A}}}$.

\begin{proposition} \label{proposition1}
    If the original matrix $\boldsymbol{\mathcal{A}}$ is nonsingular, then the condensed matrix $\hat{\boldsymbol{\mathcal{A}}}$ is also nonsingular.
\end{proposition}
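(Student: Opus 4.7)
The plan is to exploit a Schur-complement argument on a suitably reordered version of $\bmat{T}\boldsymbol{\mathcal{A}}\bmat{T}^{\top}$. I would first observe that the matrix $\bmat{T}$ in~\eqref{eqn: T} is block lower triangular with identity blocks on the diagonal, so $\det(\bmat{T})=1$ and $\bmat{T}\boldsymbol{\mathcal{A}}\bmat{T}^{\top}$ is nonsingular whenever $\boldsymbol{\mathcal{A}}$ is. Reordering the block rows and columns of~\eqref{TATt} so that the $(\mathcal{S},\boldsymbol{\lambda})$ degrees of freedom come first and $(\mathcal{N},\mathcal{M})$ come second rewrites this transformed matrix in the $2\times 2$ block form
\[
\begin{pmatrix} \bmat{D}^{\star} & \bmat{B}^{\top} \\ \bmat{B} & \hat{\boldsymbol{\mathcal{A}}} \end{pmatrix},
\quad
\bmat{D}^{\star}=\begin{pmatrix} \bmat{K}_{\mathcal{S}\mathcal{S}} & \bmat{D}^{\top} \\ \bmat{D} & \mathbf{0} \end{pmatrix},
\quad
\bmat{B}=\begin{pmatrix} \bmat{K}_{\mathcal{N}\mathcal{S}} & \mathbf{0} \\ \bmat{P}^{\top}\bmat{K}_{\mathcal{S}\mathcal{S}} & \mathbf{0} \end{pmatrix},
\]
where the $(\mathcal{N},\mathcal{M})$ block is exactly $\hat{\boldsymbol{\mathcal{A}}}$.

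The second step is to check that $\bmat{D}^{\star}$ is nonsingular. Because $\bmat{D}$ admits the exact block $\bmat{L}\bmat{U}$ factorization constructed in Subsection~\ref{subsect:block-Thomas}, it is invertible; any $(x,y)^{\top}$ in $\ker \bmat{D}^{\star}$ then satisfies $\bmat{D}x=0$ and $\bmat{K}_{\mathcal{S}\mathcal{S}}x+\bmat{D}^{\top}y=0$, forcing $x=0$ and $y=0$. With $\bmat{D}^{\star}$ invertible I can write down its inverse explicitly using the standard formula for a saddle-point block with zero $(2,2)$ entry, namely with the $(1,1)$ block equal to $\mathbf{0}$, the off-diagonals equal to $\bmat{D}^{-1}$ and $\bmat{D}^{-\top}$, and the $(2,2)$ block equal to $-\bmat{D}^{-\top}\bmat{K}_{\mathcal{S}\mathcal{S}}\bmat{D}^{-1}$.

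The key computation is the Schur complement of $\bmat{D}^{\star}$. The second block column of $\bmat{B}^{\top}$ is zero, so $(\bmat{D}^{\star})^{-1}\bmat{B}^{\top}$ has its first block row zero; left-multiplying by $\bmat{B}$, whose second block column is also zero, annihilates the remaining entries and yields $\bmat{B}(\bmat{D}^{\star})^{-1}\bmat{B}^{\top}=\mathbf{0}$. Therefore the Schur complement collapses to $\hat{\boldsymbol{\mathcal{A}}}$ itself, and the block-determinant identity
\[
\det(\boldsymbol{\mathcal{A}})=\det(\bmat{T}\boldsymbol{\mathcal{A}}\bmat{T}^{\top})=\det(\bmat{D}^{\star})\,\det(\hat{\boldsymbol{\mathcal{A}}})
\]
combined with $\det(\boldsymbol{\mathcal{A}})\neq 0$ and $\det(\bmat{D}^{\star})\neq 0$ forces $\det(\hat{\boldsymbol{\mathcal{A}}})\neq 0$.

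The main obstacle I anticipate is purely bookkeeping: reordering the four block rows and columns of~\eqref{TATt} consistently and then verifying that the zero block-column and block-row patterns of $\bmat{B}$ and $\bmat{B}^{\top}$ really do annihilate $(\bmat{D}^{\star})^{-1}$ from both sides. Once the inverse of $\bmat{D}^{\star}$ is written out in terms of $\bmat{D}^{-1}$ and $\bmat{K}_{\mathcal{S}\mathcal{S}}$ the cancellation is immediate, but the index juggling between the original $(\mathcal{N},\mathcal{M},\mathcal{S},\boldsymbol{\lambda})$ ordering and the new grouping has to be done carefully so that each term is placed in the correct block position.
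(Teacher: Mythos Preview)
Your argument is correct and takes a genuinely different route from the paper. The paper argues by contradiction: if $\hat{\boldsymbol{\mathcal{A}}}$ were singular, two distinct solutions of the condensed system could be lifted, via the explicit recovery formulas~\eqref{eqn: lambda-exp} and~\eqref{eqn: ds-exp}, to two distinct solutions of the original system $\boldsymbol{\mathcal{A}}\bvec{x}=\bvec{b}$, contradicting the nonsingularity of $\boldsymbol{\mathcal{A}}$. Your approach is instead purely algebraic: after the symmetric block permutation of $\bmat{T}\boldsymbol{\mathcal{A}}\bmat{T}^{\top}$, you identify $\hat{\boldsymbol{\mathcal{A}}}$ as the exact Schur complement of the invertible $(\mathcal{S},\boldsymbol{\lambda})$ block $\bmat{D}^{\star}$, because the zero pattern of $\bmat{B}$ annihilates the correction term $\bmat{B}(\bmat{D}^{\star})^{-1}\bmat{B}^{\top}$. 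The paper's proof is shorter and leans only on the solution-recovery step already developed in Subsection~\ref{subsect:elim-conden}; yours is more structural and yields the additional quantitative identity $\det(\boldsymbol{\mathcal{A}})=\det(\bmat{D}^{\star})\det(\hat{\boldsymbol{\mathcal{A}}})$, which makes transparent \emph{why} the condensation preserves nonsingularity rather than merely \emph{that} it does. Both proofs rely on the invertibility of $\bmat{D}$, which is already implicit in the definition $\bmat{P}=\bmat{D}^{-1}\bmat{M}$.
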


\begin{proof}
    Assume that the matrix $\hat{\boldsymbol{\mathcal{A}}}$ is singular, then the equation~\eqref{eqn: FAFt} has at least two distinct solutions. 
    Let two vectors $\bvec{u}_1$ and $\bvec{u}_2$, 
    $\bvec{u}_1 \neq \bvec{u}_2$, such that 
    $\hat{\boldsymbol{\mathcal{A}}} \bvec{u}_1 = \hat{\boldsymbol{\mathcal{A}}} \bvec{u}_2 =  \hat{\bvec{b}} $.     
    Let 
    \begin{equation}
        \bvec{u}_1 = 
        \left[ 
        \bvec{d}_{\mathcal{N}}^{(1)}, \bvec{d}_{\mathcal{M}}^{(1)} 
        \right]^{\top},
        \quad
        \bvec{u}_2 =
        \left[
        \bvec{d}_{\mathcal{N}}^{(2)}, \bvec{d}_{\mathcal{M}}^{(2)}
        \right]^{\top}.
    \end{equation}
    By~\eqref{eqn: lambda-exp} and~\eqref{eqn: ds-exp},
    the eliminated degrees of freedom $\boldsymbol{\lambda}$ and $\bvec{d}_{\mathcal{S}}$ can be recovered by the remaining degrees of freedom, 
    thus we can construct two vectors 
    $\bvec{v}_1$ and $\bvec{v}_2$ by
    \begin{eqnarray*}
    \bvec{v}_1 = 
    \left[ \bvec{d}_{\mathcal{S}}^{(1)}, \bvec{d}_{\mathcal{N}}^{(1)}, \bvec{d}_{\mathcal{M}}^{(1)}, \boldsymbol{\lambda}^{(1)} 
    \right]^{\top} 
    = 
    \left[ \bmat{P} \bvec{d}_{\mathcal{M}}^{(1)}, \bvec{d}_{\mathcal{N}}^{(1)}, \bvec{d}_{\mathcal{M}}^{(1)},
	\bmat{D}^{-{\top}} \left( \bvec{f}_{\mathcal{S}} - 	\bmat{K}_{\mathcal{S} \mathcal{N}} \bvec{d}_{\mathcal{N}}^{(1)} - 
	\bmat{K}_{\mathcal{S} \mathcal{S}} \bvec{d}_{\mathcal{S}}^{(1)} \right) 
    \right]^{\top}, \\
    \bvec{v}_2 = 
    \left[ \bvec{d}_{\mathcal{S}}^{(2)}, \bvec{d}_{\mathcal{N}}^{(2)}, \bvec{d}_{\mathcal{M}}^{(2)}, \boldsymbol{\lambda}^{(2)} 
    \right]^{\top} 
    = 
    \left[ \bmat{P} \bvec{d}_{\mathcal{M}}^{(2)}, \bvec{d}_{\mathcal{N}}^{(2)}, \bvec{d}_{\mathcal{M}}^{(2)},
	\bmat{D}^{-{\top}} \left( \bvec{f}_{\mathcal{S}} - 	\bmat{K}_{\mathcal{S} \mathcal{N}} \bvec{d}_{\mathcal{N}}^{(2)} - 
	\bmat{K}_{\mathcal{S} \mathcal{S}} \bvec{d}_{\mathcal{S}}^{(2)} \right) 
    \right]^{\top}.     
    \end{eqnarray*}
    Both $\bvec{v}_1$ and $\bvec{v}_2$ are solutions of 
    the original equation~\eqref{eqn: total}, 
    that is, $ \boldsymbol{\mathcal{A}} \bvec{v}_1 = \boldsymbol{\mathcal{A}} \bvec{v}_2 = \bvec{b} $.
    Since $ \bvec{u}_1 \neq \bvec{u}_2 $, it is evident that $\bvec{v}_1 \neq \bvec{v}_2$. 
    This contradicts the assumption that $ \boldsymbol{\mathcal{A}} $ is nonsingular.
    Therefore, the matrix $\hat{\boldsymbol{\mathcal{A}}}$ is nonsingular. 
\end{proof}

\begin{proposition}
\label{prop:hat-A-spd}
    The condensed matrix $\hat{\boldsymbol{\mathcal{A}}}$ is symmetric positive definite.
\end{proposition}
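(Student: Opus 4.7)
The plan is to exhibit the condensed matrix as a congruence transform not of the indefinite saddle point matrix $\boldsymbol{\mathcal{A}}$, but of the stiffness matrix $\bmat{K}$ itself. Symmetry will then be immediate, and positive definiteness will reduce to verifying that the associated tall matrix has full column rank, combined with the assumed positive definiteness of $\bmat{K}$.

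First I would handle symmetry: since $\boldsymbol{\mathcal{A}}$ is symmetric by equation~\eqref{eqn: total}, the identity $\hat{\boldsymbol{\mathcal{A}}}=\bmat{F}\boldsymbol{\mathcal{A}}\bmat{F}^{\top}$ from~\eqref{eqn: FAFt} gives $\hat{\boldsymbol{\mathcal{A}}}^{\top}=\bmat{F}\boldsymbol{\mathcal{A}}^{\top}\bmat{F}^{\top}=\hat{\boldsymbol{\mathcal{A}}}$ at once. This part requires no work beyond the definitions.

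Next I would establish the key factorization $\hat{\boldsymbol{\mathcal{A}}}=\bmat{Q}^{\top}\bmat{K}\bmat{Q}$ with
\[
\bmat{Q}=\begin{bmatrix}\mathbf{0}&\bmat{P}\\ \bmat{I}_{\mathcal{N}\mathcal{N}}&\mathbf{0}\\ \mathbf{0}&\bmat{I}_{\mathcal{M}\mathcal{M}}\end{bmatrix},
\]
where the row ordering is $(\mathcal{S},\mathcal{N},\mathcal{M})$ to match the diagonal blocks of $\bmat{K}$ appearing in~\eqref{twobody1}. This identity can be checked either by a direct block multiplication against the explicit form of $\hat{\boldsymbol{\mathcal{A}}}$ given in~\eqref{twobody2}, using the physical fact $\bmat{K}_{\mathcal{S}\mathcal{M}}=\bmat{K}_{\mathcal{M}\mathcal{S}}^{\top}=\mathbf{0}$ (slave and master nodes belong to distinct bodies, so there is no stiffness coupling between them), or equivalently by inspecting $\bmat{F}^{\top}\bvec{y}$ and noting that its $\boldsymbol{\lambda}$-block vanishes; consequently the off-diagonal mortar blocks $\bmat{D}$ and $\bmat{M}$ of $\boldsymbol{\mathcal{A}}$ never contribute to the quadratic form $\bvec{y}^{\top}\hat{\boldsymbol{\mathcal{A}}}\bvec{y}$. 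Positive definiteness then follows from a short injectivity argument: for any nonzero $\bvec{y}=[\bvec{d}_{\mathcal{N}},\bvec{d}_{\mathcal{M}}]^{\top}$, at least one of $\bvec{d}_{\mathcal{N}}$ or $\bvec{d}_{\mathcal{M}}$ is nonzero, and the second and third block rows of $\bmat{Q}$ being identity matrices force $\bmat{Q}\bvec{y}\neq\mathbf{0}$; the symmetric positive definiteness of $\bmat{K}$ then yields
\[
\bvec{y}^{\top}\hat{\boldsymbol{\mathcal{A}}}\bvec{y}=(\bmat{Q}\bvec{y})^{\top}\bmat{K}(\bmat{Q}\bvec{y})>0.
\]

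The only nontrivial point, and where I expect the main effort to be concentrated, is the verification of the factorization $\hat{\boldsymbol{\mathcal{A}}}=\bmat{Q}^{\top}\bmat{K}\bmat{Q}$ via block algebra, since everything else is bookkeeping. It is also worth flagging that the proposition tacitly assumes $\bmat{K}$ is strictly positive definite (not merely semidefinite as allowed earlier in the excerpt); if only positive semidefiniteness holds, the same argument yields only positive semidefiniteness of $\hat{\boldsymbol{\mathcal{A}}}$, which is consistent with the fact that Dirichlet boundary conditions must be imposed somewhere in order for the reduced system to be solvable by PCG.
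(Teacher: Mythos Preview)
Your approach is essentially the paper's: your $\bmat{Q}$ is exactly $\tilde{\bmat{T}}^{\top}\tilde{\bmat{C}}^{\top}$ in the paper's notation, and the core identity $\hat{\boldsymbol{\mathcal{A}}}=\bmat{Q}^{\top}\bmat{K}\bmat{Q}$ is the same congruence the paper writes as $\hat{\boldsymbol{\mathcal{A}}}=\tilde{\bmat{C}}\tilde{\bmat{T}}\bmat{K}\tilde{\bmat{T}}^{\top}\tilde{\bmat{C}}^{\top}$.

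The one real difference is the last step. You obtain strict positive definiteness from injectivity of $\bmat{Q}$ plus an assumed strict positive definiteness of $\bmat{K}$, and you correctly flag that if $\bmat{K}$ is only semidefinite your argument gives only semidefiniteness. The paper does not assume $\bmat{K}$ strictly positive definite; it explicitly allows $\bmat{K}$ to be merely semidefinite (this is the actual situation in models~1 and~3, where not every body carries a Dirichlet boundary). Instead, from the congruence it concludes $\hat{\boldsymbol{\mathcal{A}}}$ is symmetric positive \emph{semi}definite, and then invokes Proposition~\ref{proposition1} (nonsingularity of $\hat{\boldsymbol{\mathcal{A}}}$ inherited from nonsingularity of $\boldsymbol{\mathcal{A}}$) to upgrade to strict positive definiteness. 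So the paper's route covers precisely the case you flagged as outside the scope of your argument; your injectivity observation is correct but not needed once Proposition~\ref{proposition1} is in hand.
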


\begin{proof}
    The matrix of contact problems~\eqref{eqn: total} is:
    \begin{equation*}
        \boldsymbol{\mathcal{A}} = 
        \left[ \begin{array}{c c }
            \bmat{K} & \bmat{G}^{\top} \\
            \bmat{G} & \mathbf{0}
        \end{array} \right],        
    \end{equation*}
    assume that there are $n$ contact bodies, then the matrix $\bmat{K}$ has the form $\bmat{K} = \mathrm{diag(\bmat{K}_1, \dots, \bmat{K}_n)}$, where $\bmat{K}_i$ is the elsticity disrete operator for body $i$. Since each $\bmat{K}_i$ is symmetric positive
    definite or symmetric positive semi-definite, the matrix $\bmat{K}$ is 
    symmetric positive semi-definite. 

    Let
    \begin{eqnarray*}
    \tilde{\bmat{T}} =  
	\left[ \begin{array}{ccc}
		\bmat{I}_{\mathcal{S} \mathcal{S}}   &  & \\
		& \bmat{I}_{\mathcal{N} \mathcal{N}} &    \\
		\bmat{P}^{\top}						 &  & \bmat{I}_{\mathcal{M} \mathcal{M}}        
	\end{array} \right],    
    \quad
    \tilde{\bmat{C}} = 
    \left[ \begin{array}{ccc}
		\mathbf{0} & \bmat{I}_{\mathcal{N} \mathcal{N}} & \mathbf{0} 			\\
		\mathbf{0} & \mathbf{0} 							& \bmat{I}_{\mathcal{M} \mathcal{M}} 
	\end{array} \right],
    \end{eqnarray*}
    where $\tilde{\bmat{T}}$ is obtained by removing the last row and last column blocks from matrix $\bmat{T}$, 
    and $\tilde{\bmat{C}}$ is constructed by removing the last column blocks from matrix $\bmat{C}$.     
    Then by~\eqref{eqn: FAFt}, it is easy to verify that the following hold
     \begin{equation}
	\hat{\boldsymbol{\mathcal{A}}} = \tilde{\bmat{C}} \tilde{\bmat{T}} \bmat{K} \tilde{\bmat{T}}^{\top} \tilde{\bmat{C}}^{\top}.
    \end{equation}
    Thus, the matrix $\hat{\boldsymbol{\mathcal{A}}}$ is 
    symmetric positive semi-definite.
    Furthermore, by~\ref{proposition1}, matrix $\hat{\boldsymbol{\mathcal{A}}}$ is nonsingular,
    Thus, $\hat{\boldsymbol{\mathcal{A}}}$ is a symmetric positive definite matrix.    

\end{proof}

The original matrix $\boldsymbol{\mathcal{A}}$ before elimination has a saddle point structure. 
After DOFs condensation, the matrix $\hat{\boldsymbol{\mathcal{A}}}$ becomes symmetric positive definite.
This makes the linear system easier to be solved.

\subsection{Condensation algorithm}

The degree of freedom condensation method, 
as discussed in Subsection~\ref{subsect:matrix-present}, involves the construction of 
the matrix $\bmat{T}$ and $\bmat{C}$. Furthermore, 
the construction of the matrix $\bmat{T}$ concerns the construction
of the matrix $\bmat{P} = \bmat{D}^{-1} \bmat{M}$.
In Subsection~\ref{subsect:block-Thomas}, 
a detailed explanation has been provided 
about the computation of $\bmat{P}$.

The structure of the matrix $\bmat{D}$ is dependent on the global numbering of surface nodes. 
Only if the numbering of nodes on the contact surface is spatially adjacent, can the mortar matrix $\bmat{D}$ have a tridiagonal block structure.
If the numbering of nodes on a contact surface 
is not spatically adjacent, it is needed to resort the node numbering. 






As an illustration, a simple model is shown in Fig.~\ref{slave_num} where the numbering is adjacent on the contact surface.
Fig.~\ref{D5} presents the associated structure of the mortar matrix $\bmat{D}$. After reordering the node number based
on the space adjacent, 
the corresponding mortar matrix $\bmat{D}$ has 
a tridiagonal block structure as shown in Fig.~\ref{D52}.


\begin{figure}[htbp]
	\centering
	\includegraphics[scale=0.28]{./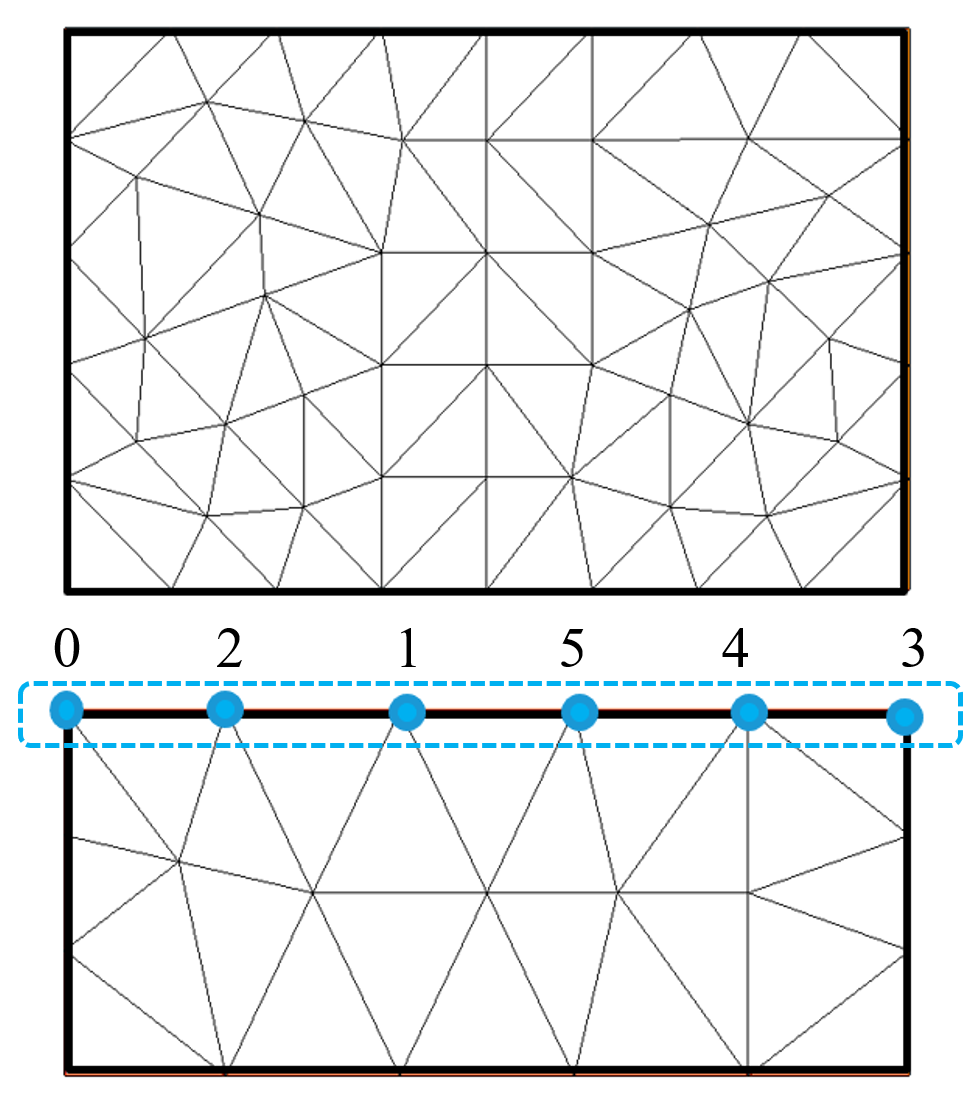}
	\caption{Node number of slave contact surface.}\label{slave_num}
\end{figure}

\begin{figure}[htbp]
	\centering
	\subfigure[Matrix pattern before rearrangement]{\includegraphics[scale=0.3]{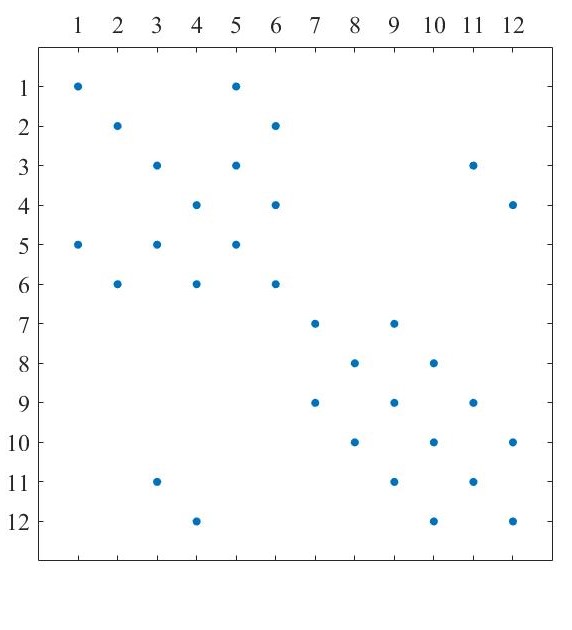} \label{D5}}
	\hspace{1in}
	\subfigure[Matrix pattern after rearrangement]{\includegraphics[scale=0.3]{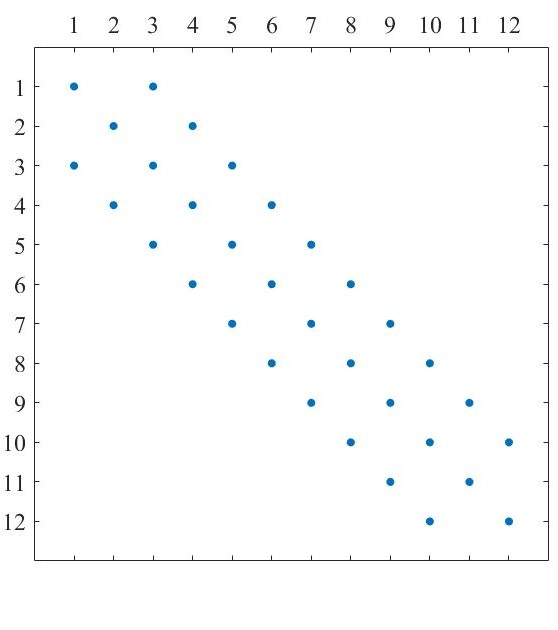} \label{D52}}
	\caption{ Matrix pattern of slave mortar matrix $\bmat{D}$.}
	\label{aa}
\end{figure}

The DOFs condensation based algorithm can now be described in Algorithm~\ref{algorithm: solve}.
In this algorithm, 
the matrix $\bmat{T}$ is constructed from Line 1--5.
Line 2--4 can be implemented in parallel because 
the computation of $\bmat{P}_i$ is independent 
for all the contact surfaces.


\begin{algorithm}
    \caption{DOFs condensation based algorithm for tied contact problem.} 
    \label{algorithm: solve}
    \hspace*{0.02in} { Input:}
    contact matrix $\boldsymbol{\mathcal{A}}$, and right-hand side $\bvec{b}$. \\
    \hspace*{0.02in} { Output:} 
    Solution of contact problems
    \begin{algorithmic}[1]
	\For{$j \in \{1, 2, \ldots, m\}$}
		\State Resort the slave surface nodes if necessary so that the node numbering is contiguous for adjacent nodes.
		\State Solve matrix equations $ \bmat{D}_j \bmat{P}_j = \bmat{M}_j $ by
  ~\eqref{eqn: low-tri-equ-sol} and~\eqref{eqn: up-tri-equ-sol} (block Thomas algorithm).  
		\State Assemble $\bmat{P}_j$ into matrix $\bmat{T}$.
	\EndFor
	\State Construct the condensation matrix $\bmat{C}$ by the global numbering of the slave nodes.
	\State Compute $\bmat{F} = \bmat{C} \bmat{T}$, 	 
 $\hat{\boldsymbol{\mathcal{A}}} = \bmat{F} \boldsymbol{\mathcal{A}} \bmat{F}^{\top}$, and $\hat{\bvec{b}} = \bmat{F} \bvec{b}$.
	\State Solve the linear system 
               $\hat{\boldsymbol{\mathcal{A}}}  \hat{\bvec{x}} = \hat{\bvec{b}}$. \\
	\Return solution $\hat{\bvec{x}}$.
    \end{algorithmic}
\end{algorithm}



\section{Numerical Results}
\label{sect:numer}

In this section, numerical results of the DOFs condensation method
is given for three contact models. 


The numerical experiments are conducted on a cluster, 
which has 28 cores and 96G memory on each node.

\subsection{Test models}
\label{subsect:model}

Three tied contact two-dimensional models are tested.  
The specific test models are given in Fig.~\ref{model},
where the first two models have three contact bodies,
and the last one has two contact bodies.
All the test models are linear elastic, 
and the material parameters include Young's modulus $E$ and Poisson's ratio $\nu$. 
In the tests, we set $E = 20$ N/m$^2$ and $\nu = 0.3$.

For the first and second test models, 
the middle body is the master body, and the rest are slave bodies, and this corresponds to two contact surfaces.
For the third model, the body on the top is 
the master body and on the bottom is the slave body,
and one contact surface is set.

For the first model, the boundary on the left is fixed 
and on the right boundary, 
a constant surface force of $p=10 \text{N}$ in the rightward direction is set.
For the second model, the lower boundaries of the three bodies are fixed, 
and an uniform surface force of $p=10 \text{N}$ in the downward direction
is set on the upper boundaries. 
For the third model, the lower boundary of the slave body is fixed, 
and an uniformly distributed surface force of $p=-1 \text{N}$ in the downward direction
is set on the upper boundary of the master body. 
For all three models, no body force is set.

All the test models are partitioned into triangular meshes, 
and the package FreeFEM++~\cite{FREEFEMhecht2012new} is employed to discretize the tied contact problem.

\begin{figure}[H]
	\centering
	\subfigure[Model 1]{\includegraphics[scale=0.3]{./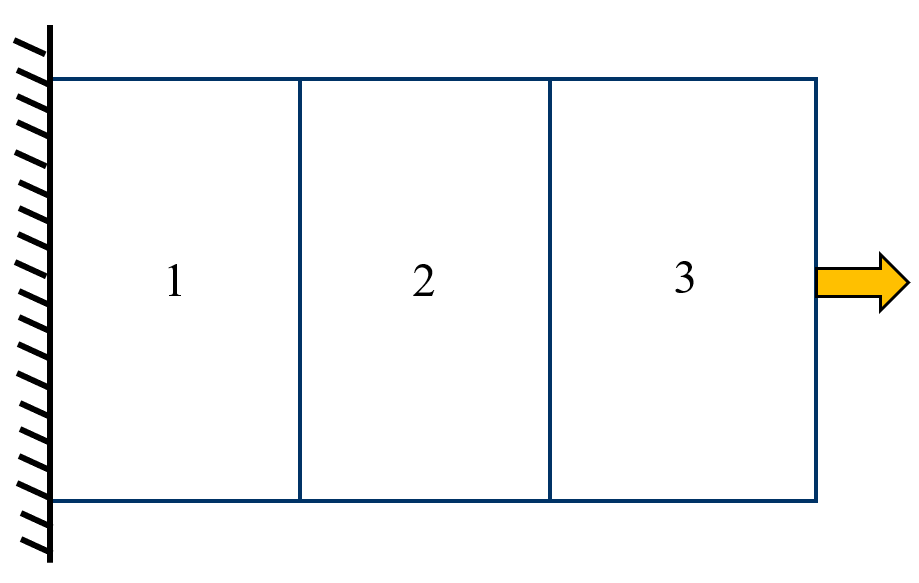} \label{model1}} 
        \subfigure[Model 2]{\includegraphics[scale=0.3]{./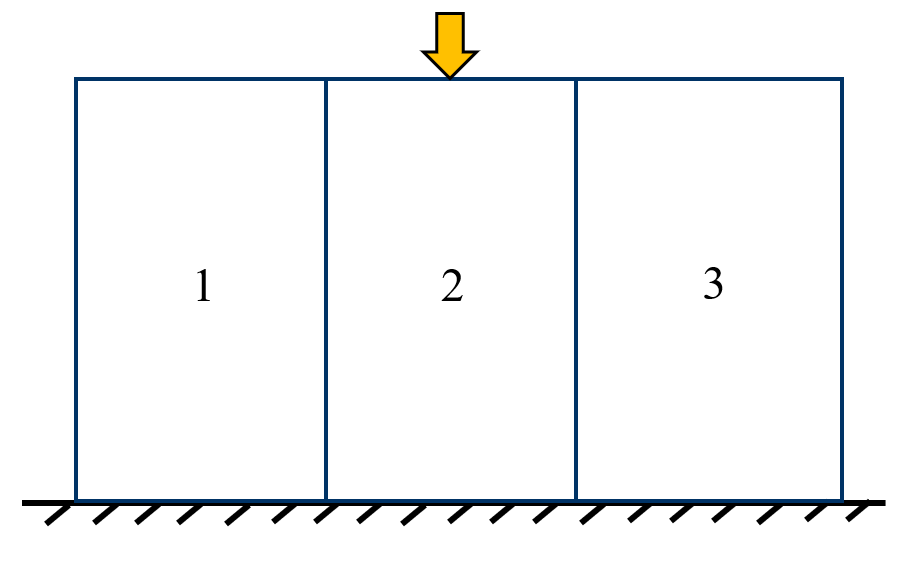} \label{model2}} 
	\subfigure[Model 3]{\includegraphics[scale=0.3]{./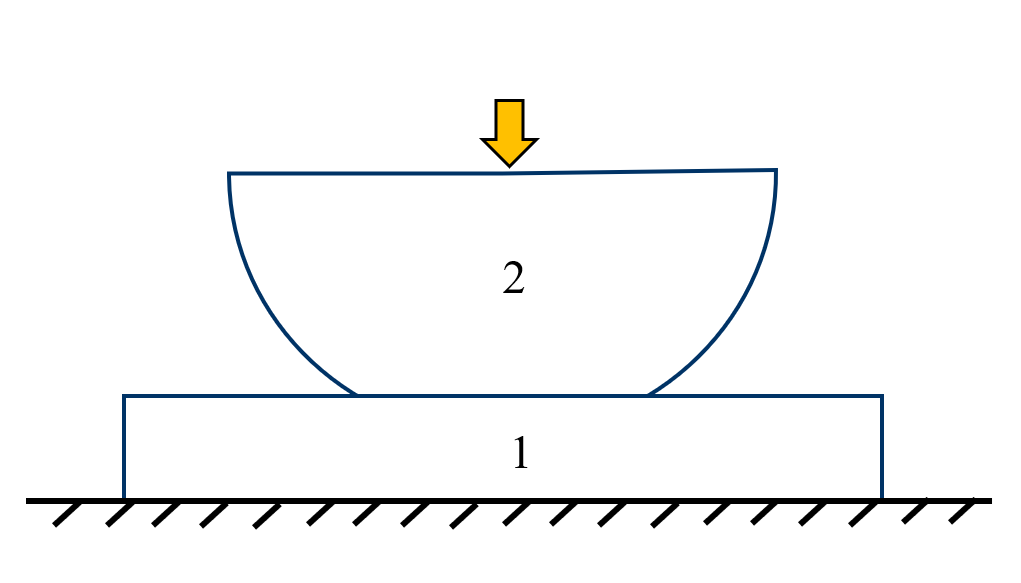} \label{model3}} 
	\caption{Tied contact example models.}
	\label{model}
\end{figure}

\subsection{Algorithms setup}
\label{subsect:alg-setup}

The condensed elimination algorithm is implemented
by using the matrix and vector data structures provided by PETSc~\cite{PETSCbalay2017petsc}.
Besides, the linear systems are solved by calling iterative methods and preconditioners provided by PETSc.
PETSc offers several Krylov subspace iterative methods, including CG, GMRES, BiCGStab, and MINRES, 
as well as some preconditioning methods like domain decomposition, 
algebraic multigrid, etc.
Additionally, some third party solver libraries, such as HYPRE~\cite{HYPREfalgout2002hypre},  
MUMPS~\cite{MUMPSamestoy2000mumps} can be called as preconditioning solvers in PETSc.

The GMRES, BiCGStab, and GCR methods are used as iterative algorithms for 
solving the original saddle point system before elimination.
After elimination, the matrix has symmetric positive definiteness property, 
and thus the CG method is used as the iterative algorithm.
The following five commonly used preconditioning methods are tested in the experiments.
\begin{itemize}
    \item JAC: Jacobi method; 
    \item BJAC: block Jacobi method;
    \item ASM: additive Schwarz method;
    \item AMG: algebraic multigrid method;
    \item SOR: successive over-relaxation method.
\end{itemize}
Besides, the SIMPLE method, which was originally designed for solving block system 
in computational fluid dynamics~\cite{patankar1983calculation}, 
has also been successfully applied in contact mechanics~\cite{AMGwiesner2021algebraic}.
Thus, it is tested as one preconditioning method
for solving the equation before elimination by considering the saddle point property.
In the implementation of the SIMPLE method,
it is needed to provide solution methods 
for solving the subsystem corresponding to 
the left-upper and right-lower blocks of the saddle point matrix, which represent the subsystem of displacement 
and Lagrange multiplier in the context of contact computation.
To solve the displacement subsystem, the AMG method is used, and the coarsening strategy is PMIS.
The ILU method is used to solve the Lagrange multiplier subsystem~\cite{AMGwiesner2021algebraic}.

When AMG method is used as a preconditioner for solving 
the linear system after elimination, 
we use HMIS coarsening method~\cite{de2006reducing}, 
and the extended+$i$ interpolation method~\cite{de2008distance}.
Furthermore, the node coarsening strategy, 
which is based on the matrix generated by the row sum norm of the eliminated matrix, is used~\cite{AMGbaker2016scalability}.


In numerical experiments, the maximum number of iterations is 2000, 
and the convergence is declared if the relative residual is less than $10^{-8}$.
In the reported results, 
{\NIT} represents the number of Krylov iterations,
and $r_{\text{rel}}^k$ represents the relative residual norm, 
that is $r_{\text{rel}}^k = \|\bvec{r}_k\|_2 / |\bvec{r}_0\|_2$.

The solution time for the saddle point system or
the condensed system is denoted as \Tsol.
In the DOFs condensation method, 
the elimination time is denoted as \Tcon,
and total time for elimination and solution time is represented by \Ttot. 
The elimination time is primarily composed of four
parts: (a) \TT, the time to construct the matrix $\bmat{T}$; (b) \TC, the time to construct the matrix $\bmat{C}$; (c) \TA, the time to generate the elimination system; and (d) \TO, other computation time.

\subsection{Solution comparison for transformed and untransformed linear systems}
\label{subsect:sol-compare}

In this subsection, we show the solution effectiveness for the original (untransformed) saddle point systems~\eqref{twobody1} and condensed (transformed) systems~\eqref{twobody2} by some preconditioned Krylov methods. 
For the tested three models, the grids on the contact surfaces are non-matching. 
Tables~\ref{tab:method1}--\ref{tab:method3} show the solution results for three models.
All tests are carried out on 28 processors.
The GMRES, BiCGStab, MINRES and GCR are used 
to solve the linear systems before elimination (saddle point system),
with GCR performs best. 
Therefore, in these tables, only the results of GCR 
are listed.
The symbol * in the tables indicates that the corresponding method failed to solve the linear system. 


\begin{table}[H]
\setlength{\tabcolsep}{3.5mm}
  \centering
  \caption{Solution effectiveness for equations of model 1.}
    \begin{tabular}{ccccccc}
    \toprule
    Equation  & DOFs    & Method   & Preconditioner   & \NIT  & $r_{\text{rel}}^k$  & \Tsol \\
    \midrule
    \multirow{7}[2]{*}{\makecell[c]{Eq.~\eqref{twobody1}}} & \multirow{7}[2]{*}{639 382} & \multirow{7}[2]{*}{GCR} & JAC   & 2000  & 7.25$ \times 10^{-2}$ & 7.06 \\
          &       &       & BJAC  & *     & *     & * \\
          &       &       & ASM   & 2000  & 6.44$ \times 10^{-5}$ & 32.02 \\
          &       &       & AMG   & 2000  & 8.54$ \times 10^{-6}$ & 25.48 \\
          &       &       & SOR   & 2000  & 6.80$ \times 10^{-2}$ & 9.29 \\
          &       &       & SIMPLE & 2000  & 9.99$ \times 10^{-1}$ & 53.39 \\
    \midrule
    \multirow{6}[2]{*}{\makecell[c]{Eq.~\eqref{twobody2}}} & \multirow{6}[2]{*}{635 374} & \multirow{6}[2]{*}{CG} & JAC   & 2000  & 7.54$ \times 10^{-3}$ & 5.55 \\
          &       &       & \textbf{BJAC}  & \textbf{776}   & $\bm{9.28\times 10^{-9}}$ & \textbf{10.18} \\
          &       &       & \textbf{ASM}   & \textbf{514}  & $\bm{ 8.97\times 10^{-9}}$ & \textbf{11.89} \\
          &       &       & \textbf{AMG}   & \textbf{21}    & $\bm{6.98\times 10^{-9}}$ & \textbf{1.34} \\
          &       &       & SOR   & 2000  & 1.99$ \times 10^{-4}$ & 8.31 \\
    \bottomrule
    \end{tabular}%
  \label{tab:method1}%
\end{table}%

\begin{table}[H]
\setlength{\tabcolsep}{3.5mm}
  \centering
  \caption{Solution effectiveness for equations of model 2.}
    \begin{tabular}{ccccccc}
    \toprule
    Equation  & DOFs    & Method   & Preconditioner   & \NIT  & $r_{\text{rel}}^k$  & \Tsol \\
    \midrule
    \multirow{7}[2]{*}{\makecell[c]{Eq.~\eqref{twobody1}}} & \multirow{7}[2]{*}{639 382} & \multirow{7}[2]{*}{GCR} & JAC   & 2000  & 8.77$ \times 10^{-2}$ & 6.62 \\
          &       &       & BJAC  & *     & *     & * \\
          &       &       & ASM   & 2000  & $4.93 \times 10^{-8}$ & 30.84 \\
          &       &       & AMG   & 2000  & 6.49$ \times 10^{-7}$ & 25.74 \\
          &       &       & SOR   & 2000  & 5.69$ \times 10^{-2}$ & 9.25 \\
          &       &       & \textbf{SIMPLE} & \textbf{261}   & $ \bm{9.47\times 10^{-9}}$ & \textbf{8.17} \\
    \midrule
    \multirow{6}[2]{*}{\makecell[c]{Eq.~\eqref{twobody2}}} & \multirow{6}[2]{*}{635 374} & \multirow{6}[2]{*}{CG} & JAC   & 2000  & 6.21$ \times 10^{-2}$ & 5.93 \\
          &       &       & \textbf{BJAC}  & \textbf{728}   & $\bm{9.75\times 10^{-9}}$ & \textbf{9.54} \\
          &       &       & \textbf{ASM}   & \textbf{489}  & $\bm{9.41\times 10^{-9}}$ & \textbf{11.07} \\
          &       &       & \textbf{AMG}   & \textbf{20}    & $\bm{3.97\times 10^{-9}}$ & \textbf{1.39} \\
          &       &       & SOR   & 2000  & 4.51$ \times 10^{-4}$ & 8.21 \\
    \bottomrule
    \end{tabular}%
  \label{tab:method2}%
\end{table}%

\begin{table}[H]
\setlength{\tabcolsep}{3.5mm}
  \centering
  \caption{Solution effectiveness for equations of model 3.}
    \begin{tabular}{ccccccc}
    \toprule
    Equation  & DOFs    & Method   & Preconditioner   & \NIT  & $r_{\text{rel}}^k$  & \Tsol \\
    \midrule
    \multirow{7}[2]{*}{\makecell[c]{Eq.~\eqref{twobody1}}} & \multirow{7}[2]{*}{712 322} & \multirow{7}[2]{*}{GCR} & JAC   & 2000  & 7.05$ \times 10^{-2}$ & 7.87 \\
          &       &       & BJAC  & *     & *     & *  \\
          &       &       & ASM   & 2000  & 7.78$ \times 10^{-7}$ & 33.14 \\
          &       &       & AMG   & 2000  & 6.51$ \times 10^{-1}$ & 28.64 \\
          &       &       & SOR   & 2000  & 6.44$ \times 10^{-2}$ & 10.32 \\
          &       &       & SIMPLE & 2000  & 9.99$ \times 10^{-1}$ & 60.9 \\
    \midrule
    \multirow{6}[2]{*}{\makecell[c]{Eq.~\eqref{twobody2}}} & \multirow{6}[2]{*}{710 718} & \multirow{6}[2]{*}{CG} & JAC   & 2000  & 2.07$ \times 10^{-2}$ & 5.36 \\
          &       &       & \textbf{BJAC}  & \textbf{737}   & $\bm{9.92\times 10^{-9}}$ & \textbf{10.43} \\
          &       &       & \textbf{ASM}  & \textbf{452}   & $\bm{8.43\times 10^{-9}}$ & \textbf{11.69} \\
          &       &       & \textbf{AMG}   & \textbf{22}    & $\bm{3.86\times 10^{-9}}$ & \textbf{1.18} \\
          &       &       & SOR   & 2000  & 3.51$ \times 10^{-4}$ & 8.34 \\
    \bottomrule
    \end{tabular}%
  \label{tab:method3}%
\end{table}%

The results in Tables~\ref{tab:method1}--\ref{tab:method3} indicate that, 
GCR does not converge within 2000 iterations when the first five preconditioning methods
are used to solve the original saddle point linear systems.
The BJAC cannot be used due to zero elements in the diagonal block of the saddle point system. 
The SIMPLE preconditioned CGR method can successfully solve the saddle point system of model 2, but it 
fails to solve the systems of models 1 and 3.
In each iteration step of the SIMPLE method, 
a linear system for displacement with respect to the upper-left block must be solved. 
Theoretically, the SIMPLE method is only applicable when the upper-left block is nonsingular. 
However, for the contact problem, the stiffness matrix $\bmat{K}$ of the upper-left block is nonsingular only under certain circumstances. 
For model 2, the fixed boundary conditions are applied to each contact body, and the corresponding stiffness matrix is positive definite. 
However, for models 1 and 3, the stiffness matrix is semi-positive definite. This will make the SIMPLE preconditioning deteriorate.  
In short, for all cases of solving the saddle point system, 
only the SIMPLE preconditioned GCR method succeeds 
for model 2, with the solution time being 8.17s.  


The results of the transformed system show that, by using the BJAC, ASM, or AMG preconditioning method, CG can converge for 
all cases of the three test models,
with AMG method performing best. 
For example, the GCR method with SIMPLE preconditioning requires 261 iterations and takes a total of 8.17s to solve the saddle point system of model 2. 
However, by employing AMG preconditioning method, CG can solve the transfored (by DOFs elimination) linear system with 20 iterations, costing 1.39s.
As a result, the condensed elimination approach can greatly reduce the difficulty of solving the linear system.


The cost of the DOFs condensation method contains 
the elimination and equation solution processes.
To show the specific cost for these processes,
Table~\ref{tab: time} is presented about 
the elimination time \Tcon, solution time \Tsol, 
total time \Ttot, and the proportion of elimination time for three models
when the number of processors is 28.


\begin{table}[H]
\setlength{\tabcolsep}{6.5mm}
  \centering
  \caption{Distribution of the elimination time for three models.}
    \begin{tabular}{cccccr}
    \toprule
    Model   & Preconditioner   & \Tcon  & \Tsol  & \Ttot  & \Tcon / \Ttot \\
    \midrule
    \multirow{3}[1]{*}{model 1} & BJAC  & 0.88  & 10.18 &  11.06 & 7.96\% \\
          & ASM   & 0.88  & 11.89 &  12.77 &  7.83\% \\
          & AMG   & 0.88  &  1.34 &   2.22 & 39.64\% \\
    \midrule
    \multirow{3}[2]{*}{model 2} & BJAC  & 0.91  &  9.54 &  10.45 & 8.71\% \\
          & ASM   & 0.91  & 11.07 &  11.98 &  7.60\% \\
          & AMG   & 0.91  &  1.39 &   2.30 & 39.57\% \\
    \midrule
    \multirow{3}[2]{*}{model 3} & BJAC  & 0.90  & 10.43 &  11.33 & 7.94\% \\
          & ASM   & 0.90  & 11.69 &  12.59 &  7.15\% \\
          & AMG   & 0.90  &  1.18 &   2.08 & 43.27\% \\
    \bottomrule
    \end{tabular}%
  \label{tab: time}%
\end{table}%

From Table~\ref{tab: time}, one can find that the elimination time is less than the solution time for all cases. The elimination time is less than 1 second 
for three models. The equation solution time is 
greater than 11 seconds for ASM preconditioning, 
9.5 seconds for BJAC preconditioning, and
1.18 seconds for AMG preconditioning.



\subsection{Algorithm scalability}
\label{subsect:alg-scalability}

By the results in Subsection~\ref{subsect:sol-compare},
the CG method with AMG preconditioning is the best solver for solving the problems. 
To show the algorithmic scalability of the method 
for different sizes of the problem, 
Table~\ref{tab: size} is presented.
In this table, four cases of DOFs are considered for each model. 
The total iteration numbers and solution time with 28 MPI processors are listed
for all cases.
The results show that the iteration number is 
relatively stable with the variation of the numbers of DOFs. At the same time, it can be observed that
the increase of solution time is less than the increase 
of the number of DOFs. Therefore,
for solving the transformed (DOFs condensed) system,
AMG preconditioned CG method shows good 
algorithm scalability.
This further shows that the DOFs condensation method is effective for different sizes of problems.


\begin{table}[H]
\setlength{\tabcolsep}{5.5mm}
  \centering
  \caption{Iteration numbers and solution time of the AMG preconditioned CG method for solving the transformed systems with different DOFs.}
    \begin{tabular}{crccccc}
    \toprule
    Model & \multicolumn{1}{c}{DOFs} & \Tcon  & \Tsol  & \Ttot  & \NIT   & $r_{\text{rel}}^k$ \\
    \midrule
    \multirow{4}[1]{*}{model 1} & 5 661 334 & 5.50  & 9.85  & 15.35  & 24    & 6.54$ \times 10^{-9}$ \\
          & 10 078 520 & 7.74  & 14.24  & 21.98  & 23    & 9.32$ \times 10^{-9}$ \\
          & 22 483 048 & 14.52  & 29.65  & 44.17  & 26    & 7.53$ \times 10^{-9}$ \\
          & 40 028 566 & 26.21  & 64.56  & 90.77  & 25    & 8.54$ \times 10^{-9}$ \\
    \midrule
    \multirow{4}[2]{*}{model 2} & 5 661 334 & 5.18  & 9.22  & 14.40  & 22    & 8.48$ \times 10^{-9}$ \\
          & 10 078 520 & 7.82  & 14.35  & 22.17  & 22    & 5.32$ \times 10^{-9}$ \\
          & 22 483 048 & 14.06  & 30.08  & 44.13  & 25    & 9.15$ \times 10^{-9}$ \\
          & 40 028 566 & 22.55  & 58.42  & 80.97  & 24    & 5.37$ \times 10^{-9}$ \\
    \midrule
    \multirow{4}[2]{*}{model 3} & 5 351 164 & 7.11  & 8.20  & 15.31  & 24    & 5.91$ \times 10^{-9}$ \\
          & 11 369 428 & 12.50  & 15.23  & 27.73  & 23    & 8.20$ \times 10^{-9}$ \\
          & 22 465 866 & 20.04  & 29.35  & 49.39  & 25    & 3.64$ \times 10^{-9}$ \\
          & 40 729 666 & 31.56  & 58.24  & 89.80  & 25    & 5.32$ \times 10^{-9}$ \\
    \bottomrule
    \end{tabular}%
  \label{tab: size}%
\end{table}%

\subsection{Parallel scalability}
\label{subsect:para-scalability}

To show the parallel scalability of the 
DOFs condensation combined with AMG preconditioned
CG method, Table~\ref{tab: np} is presented for 
model 1 with a fixed degree of freedom of 22.48 million.
By increasing the number of processors, the
solution time, the speedups (denoted by $S_{p}$) and parallel efficiency (denoted by $\eta$) 
is listed in the table.
At the same time, the iteration numbers 
are listed.


\begin{table}[H]
\setlength{\tabcolsep}{5.2mm}
  \centering
  \caption{Parallel performance of the DOFs condensation and AMG preconditioned CG method for solving model 1  (22 483 048 DOFs).}
    \begin{tabular}{cccccccc}
    \toprule
    Processors    & \Tcon  & \Tsol  & \Ttot  & \NIT  & $r_{\text{rel}}^k$  & $S_p$   & $\eta$(\%) \\
    \midrule
    14    & 18.17  & 64.16  & 82.33  & 37    & 4.44$ \times 10^{-9}$ & 1.00  & 100 \\
    28    & 14.45  & 36.45  & 50.91  & 37    & 3.77$ \times 10^{-9}$ & 1.62  & 80.86 \\
    56    & 13.28  & 26.61  & 39.88  & 38    & 3.16$ \times 10^{-9}$ & 2.06  & 51.61 \\
    112   & 19.73  & 30.17  & 49.90  & 38    & 8.71$ \times 10^{-9}$ & 1.65  & 20.62 \\
    \bottomrule
    \end{tabular}%
  \label{tab: np}%
\end{table}%

It can be observed from Table~\ref{tab: np} that 
when the number of processors increases from 14 to 56,
the total solving time decreases from 82.33s to 39.88s,
and the speedup varies from 1 to 2.06,
the corresponding parallel efficiency decreases from 100\%
to 51.61\%.
When the number of processors further increases to 112,
the total solving time increases, and 
the corresponding parallel efficiency decreases to 20.65\%.
This shows that the algorithm can be scaled to 56 processors. 
Because of the total size of the 
problem is fixed, when 112 processors are used
to solve the problem, the cost of communication 
increases too much such that 
the total solution increases.


%

Lastly, Fig.~\ref{fig: displace} is given to show the deformation of 
models 1 and 3, where the color represents the magnitude 
of the displacement.
From Fig.~\ref{fig: three-displace}, one can see that almost the whole body has some deformation, with the part near right boundary showing
the largest deformation. This can be reflected by the value of the magnitude of the displacement. 

From Fig.~\ref{fig: two-displace} one can see that, for model 3, 
the upper body has some deformation, while the deformation of the lower body is not obvious.
The largest value of the magnitude of displacement 
is near the upper side, 
where corresponding the 
relatively largest deformation.



\begin{figure}[H]
	\centering
	\subfigure[Model 1]{\includegraphics[scale=0.18]{./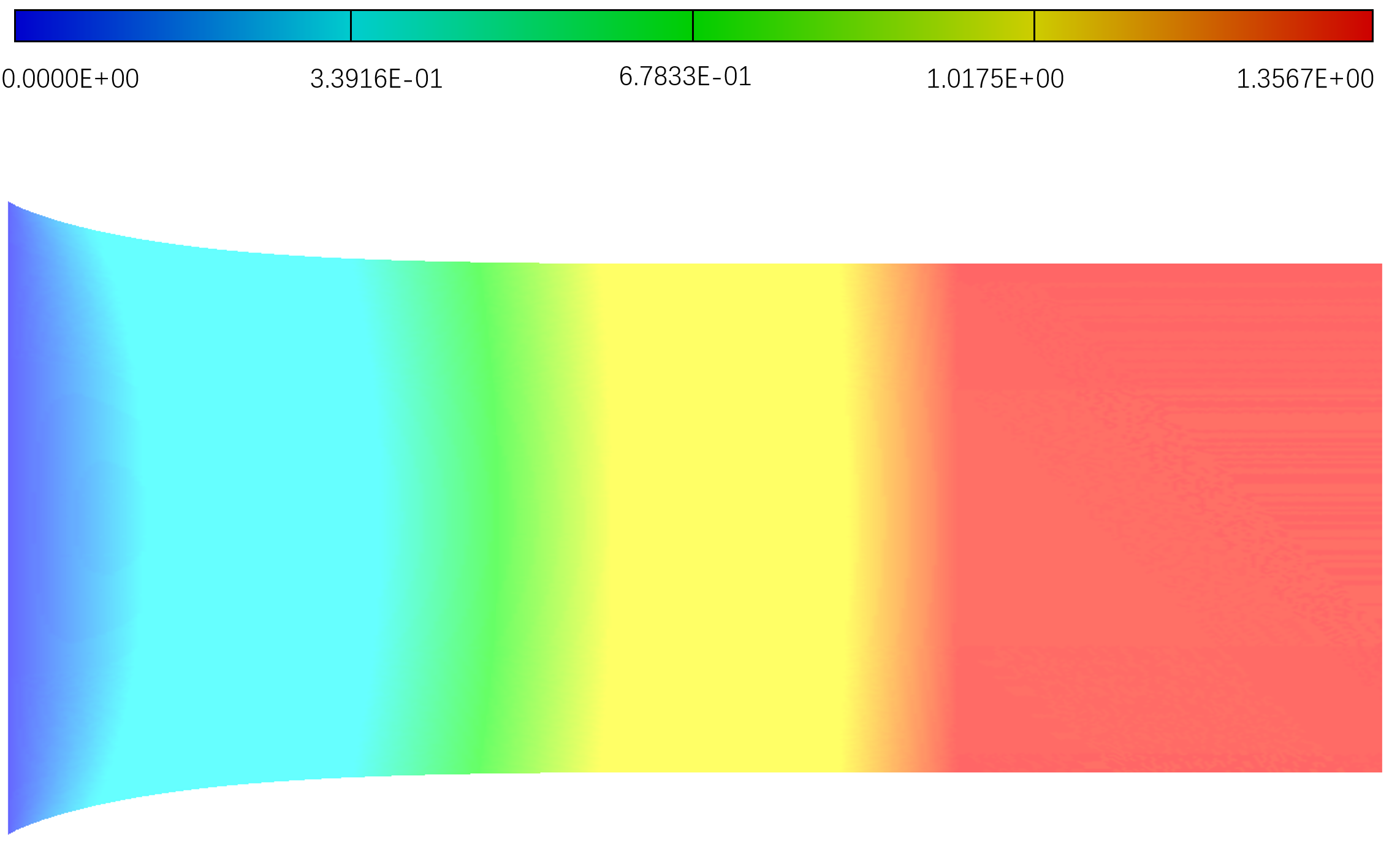} \label{fig: three-displace}} 
	\hspace{0.5in}
        \subfigure[Model 3]{\includegraphics[scale=0.18]{./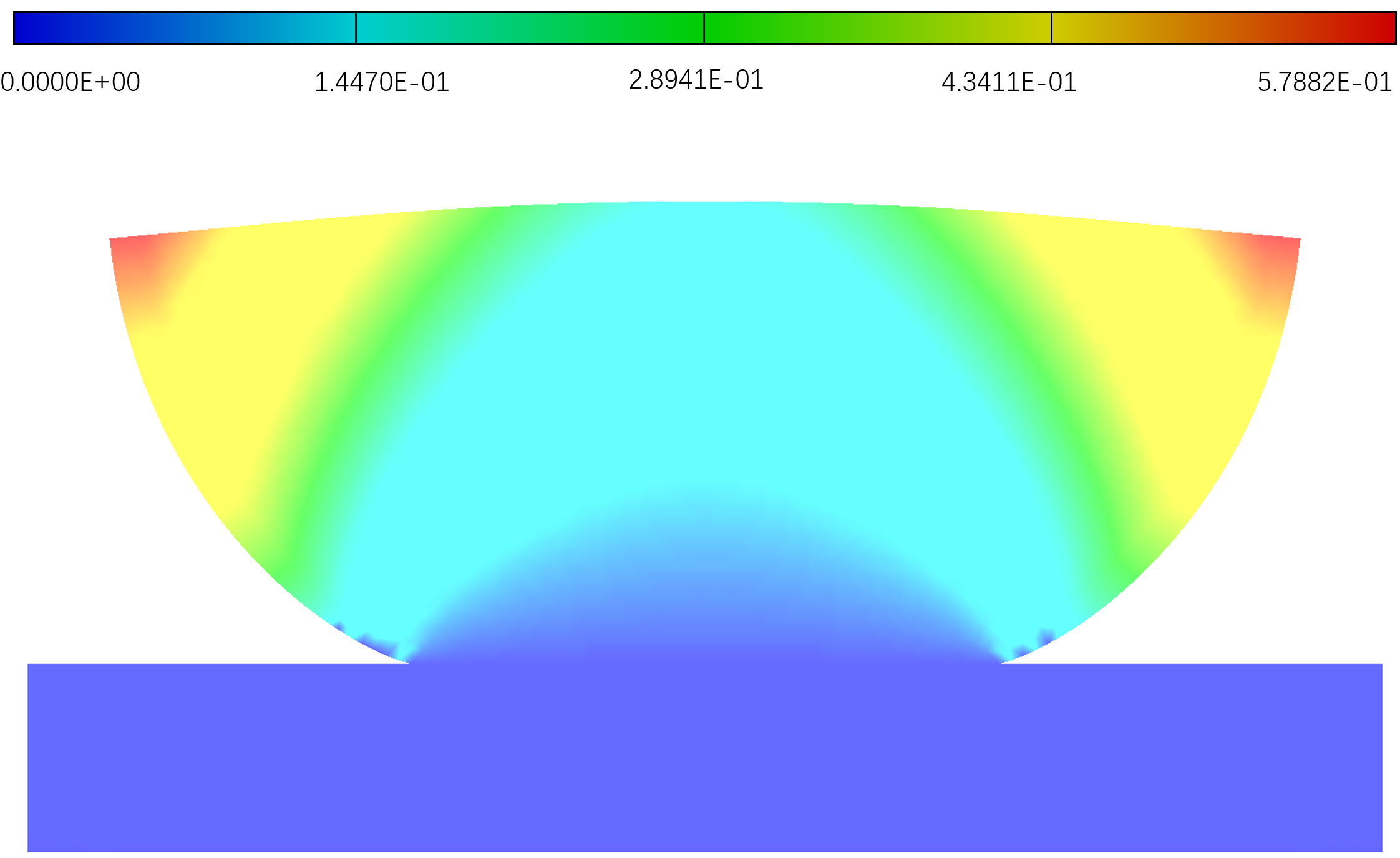} \label{fig: two-displace}} 
	\caption{Deformation of example models.}
	\label{fig: displace}
\end{figure}

\section{Conclusion and Remarks}
\label{sect:remark}

It is crucial to solve the linear system in contact computation.
Particularly, when the contact constraint is enforced by the Lagrange method, 
the obtained saddle point system is difficult to solve 
because the coefficient matrix is indefinite.
In this paper, by using the characteristics of the contact computation,
an efficient solution strategy is proposed for solving 
the linear system of the two-dimensional
tied contact applications. 
The strategy consists of two steps. First, the saddle point system is transformed into a symmetric positive definite system 
by eliminating the degrees of freedom of the Lagrange multiplier and 
displacement on the slave surfaces. Then the obtained linear 
system is solved by a preconditioned CG method.
The key of the transformation is based on 
one of the mortar matrix's tridiagonal block property.

Numerical results for three contact models show that the proposed 
strategy is effective for solving the linear systems in contact computation. 
The saddle point system is difficult to solve by using 
the commonly used methods. The transformed system
can be solved by the preconditioned CG method. 
Among the tested methods, the CG with AMG preconditioning performs the best, achieving fast convergence with fewer iterations. 
The time spent on the elimination process is generally 
less than half of the total time.
Furthermore, 
the solution strategy is algorithmic scalable measured by the
iteration of the AMG preconditioned CG method. 

For the proposed DOFs condensation based method,
it is needed to do further research on two aspects:
first, the cost of elimination is dominated by 
the product of three matrices. More research should be done to reduce 
this cost. Second, in this paper, we only consider the two-dimensional case.
For three-dimensional contact problems, some specific elimination strategies should 
be designed.



\bibliographystyle{elsarticle-num} 
\bibliography{DOFsCondensationAlgorithm}





\end{document}